\newcommand {\R}	{\mathbb{R}}
\newcommand {\N}	{\mathbb{N}}
\newcommand {\Z}	{\mathbb{Z}}
\newcommand {\C}	{\mathbb{C}}
\DeclareMathOperator{\re}{Re}
\DeclareMathOperator{\Id}{Id}
\DeclareMathOperator{\rg}{rg}
\DeclareMathOperator{\dvol}{dvol}
\DeclareMathOperator{\loc}{{loc}}
\renewcommand{\div}{\mathrm{div}}
\newcommand{\dX}{{\partial X}}
\newcommand{\sL}{\mathcal{L}}
\newcommand{\rC}{\mathrm{C}}
\newcommand{\rL}{\mathrm{L}}
\newcommand{\rW}{\mathrm{W}}
\newcommand{\ddn}{\frac{\partial^a}{\partial \nu^g}}
\renewcommand{\epsilon}{\varepsilon}
\setlist[enumerate]{font = \normalfont}
\theoremstyle{plain}
\newtheorem{thm}{Theorem}[section]
\newaliascnt{cor}{thm}
\newaliascnt{prop}{thm}
\newaliascnt{lem}{thm}
\newtheorem{cor}[cor]{Corollary}
\newtheorem{prop}[prop]{Proposition}
\newtheorem{lem}[lem]{Lemma}
\newcounter{stp}
\newcounter{stpi}
\newcounter{stpci}
\newcounter{stpiii}
\theoremstyle{theorem}
\newtheorem{step}[stp]{Step}
\theoremstyle{definition}
\newaliascnt{defn}{thm}
\newaliascnt{asu}{thm}
\newaliascnt{con}{thm}
\newtheorem{defn}[defn]{Definition}
\theoremstyle{remark}
\newaliascnt{rem}{thm}
\newaliascnt{exa}{thm}
\newaliascnt{masu}{thm}
\newaliascnt{nota}{thm}
\newaliascnt{sett}{thm}
\newtheorem{rem}[rem]{Remark}
\newtheorem{nota}[nota]{Notation}
\newtheorem{sett}[sett]{Abstract Setting}
\numberwithin{equation}{section}
\title [Dirichlet-to-Neumann operators on manifolds]
{Dirichlet-to-Neumann operators 
	on manifolds 
}
\author{Tim Binz}
\subjclass{47D06, 34G10, 47E05, 47F05}%
\keywords{Dirichlet-to-Neumann operator, Wentzell boundary conditions, analytic semigroup, Riemmanian manifolds}%
\date{\today}%
\begin{document}

\maketitle

\begin{abstract}
We consider the Dirichlet-to-Neumann operator associated to a strictly elliptic operator on the space $\rC(\partial M)$ of continuous functions on the boundary $\partial M$ of a compact manifold $\overline{M}$ with boundary. We prove that it generates an analytic semigroup of angle $\nicefrac{\pi}{2}$, generalizing and improving \cite{Esc:94} with a new proof. Our result fits with the main result in \cite{EO:17} in the case of domains with smooth boundary. Combined with \cite[Thm.~3.1]{EF:05} and \cite{Bin:18a} this yields that the corresponding strictly elliptic operator with Wentzell boundary conditions generates a compact and analytic semigroups of angle $\nicefrac{\pi}{2}$ on the space $\rC(\overline{M})$.
\end{abstract}

\section{Introduction}

Differential operators with dynamic boundary conditions on manifolds with boundary describe a  system whose dynamics consisting of 
two parts: a dynamics on the manifold interacting with an additional dynamics on the boundary. This leads to differential operators with so called Wentzell boundary conditions, see \cite[Sect.~2]{EF:05}.

\smallskip
On spaces of continuous functions on domains in $\R^n$ such operators have first been studied systematically by 
Wentzell \cite{Wen:59} and Feller \cite{Fel:54}.
Later Arendt et al. \cite{AMPR:03} proved that the Laplace operator with Wentzell boundary conditions generates a positive, contractive $C_0$-semigroup. Engel \cite{Eng:03} improves this by showing that this semigroup is analytic with angle of analyticity $\nicefrac{\pi}{2}$. 
Later Engel and Fragnelli \cite{EF:05} generalize this result to uniformly elliptic operators, however without specifying the corresponding angle of analyticity. For related work see also
\cite{CT:86}, \cite{CM:98alt}, 
\cite{FGGR:02}, 
\cite{CENN:03}, 
\cite{VV:03}, 
\cite{CENP:05}, 
\cite{FGGR:10},
\cite{War:10}
and the references therein. Our interest in this context is the generation of an analytic semigroup with the optimal angle of analyticity.

\smallskip
As shown in \cite{EF:05} and \cite{BE:18} this problem is closely connected to the generation of an analytic semigroup by the Dirichlet-to-Neumann operator on the boundary space.
More precisely, based on the abstract theory for boundary perturbation problems developed by Greiner in \cite{Gre:87}, it has been shown in \cite{EF:05} and in \cite{BE:18} that the coupled dynamics 
can be decomposed into two independent parts: a dynamics on the interior and a dynamics on the boundary. The first one is described by the differential operator on the manifold with Dirichlet boundary conditions while the second is governed by the associated Dirichlet-to-Neumann operator. 

\smallskip
On domains in $\R^n$ the generator property of differential operators with Dirichlet boundary conditions is quite well understood, see 
\cite{Ama:95} and \cite{Lun:95}. 
On compact Riemannian manifolds with boundary it has been shown in \cite{Bin:18a} that strictly elliptic operators with Dirichlet boundary conditions are sectorial of angle $\nicefrac{\pi}{2}$ and have compact resolvents on the space of continuous functions. 

\smallskip
Dirichlet-to-Neumann operators have been studied e.g. by \cite{SU:90}, \cite{LU:01}, \cite{LTU:03} and \cite[App.~C]{Tay:81}.
For the operator-theoretic context see, e.g., the work of
Amann and Escher \cite{AE:96} and Arendt and ter Elst \cite{AT:11}, \cite{ATES:13} and \cite{AE:17}. In particular, on domains in $\R^n$ Escher \cite{Esc:94} has shown that such Dirichlet-to-Neumann operators generate analytic semigroups on the space of continuous functions, however without specifying the corresponding angle of analyticity. Finally, ter Elst and Ouhabaz \cite{EO:17} proved that this angle is $\nicefrac{\pi}{2}$ and extended the result of Escher \cite{Esc:94} to differential operators with less regular coefficients. 

\smallskip
In this paper we study such Dirichlet-to-Neumann operators on the space of continuous functions on Riemannian manifolds 
and show that they generate compact and analytic semigroups of angle $\nicefrac{\pi}{2}$ on the continuous functions. 

\smallskip
We first explain our setting and terminology. 
Consider a strictly elliptic differential operator $A_m: D(A_m) \subset \rC(\overline{M}) \to \rC(\overline{M})$, as given in \eqref{Def: A_m M}, on the space $\rC(\overline{M})$ of continuous functions on a smooth, compact, orientable Riemannian manifold $\overline{M}$ with smooth boundary $\partial M$. Moreover, let $\ddn:D(\ddn)\subset \rC(\overline{ M}) \to \rC(\partial {M})$ be the outer conormal derivative, $\beta>0$ and $\gamma\in \rC(\partial M)$. We consider $B := -\beta\cdot \ddn f+\gamma\cdot f\big|_{\partial M} :D(B) \subset \rC(\overline{M}) \to \rC(\partial M)$, as in \eqref{Def: B M}, and define the operator $A^B f := A_m f$ with \emph{Wentzell boundary conditions} by requiring
\begin{equation}\label{eq:bc-W-Lap} 
f\in D(A^B)
\quad:\iff\quad f \in D(A_m) \cap D(B) \text{ and }
A_m f\big|_{\partial M}=Bf .
\end{equation}
For a continuous function $\varphi \in \rC(\partial M)$ on the boundary the corresponding Dirichlet problem
\begin{align}
\begin{cases}
A_m f = 0, \label{eq:Dir-Prob} \\
f|_{\partial M} = \varphi, 
\end{cases}
\end{align}
is uniquely solvable by \cite[Cor.~9.18]{GT:01}.  Moreover, by  the maximum principle, see \cite[Thm.~9.1]{GT:01}, the associated solution operator $L_0:\rC(\partial {M})\to\rC(\overline{ M})$ is bounded. 
Then the \emph{Dirichlet-to-Neumann operator} is 
\begin{equation}\label{def-N}
N \varphi := -\beta \ddn\cdot L_0 \varphi\quad \text{ for } \varphi \in 
D(N) := \left\{ \varphi \in \rC(\partial M) \colon L_0 \varphi \in D(B) \right\}.
\end{equation}
That is, $N\varphi$ is obtained by applying the Neumann boundary operator $-\beta \ddn$ to the solution $f$ of the Dirichlet problem \eqref{eq:Dir-Prob}. 

%
%
\smallskip 

Our main results are the following. 
\begin{enumerate}[(i)]
\item The Dirichlet-to-Neumann operator
$N$ in \eqref{def-N} generates a compact and analytic semigroup of angle $\nicefrac{\pi}{2}$ on $\rC(\partial M)$;
\item the operator
$A^B$ with Wentzell boundary conditions \eqref{eq:bc-W-Lap}  generates a compact and analytic semigroup of angle $\nicefrac{\pi}{2}$ on $\rC(\overline{M})$.
\end{enumerate}

This extends the results from Escher \cite{Esc:94} and Engel-Fragnelli \cite[Cor.~4.5]{EF:05} to elliptic operators on compact manifolds with boundaries and gives the maximal angle of analyticity $\nicefrac{\pi}{2}$ in both cases. In the flat case the result for the Dirichlet-to-Neumann operator coincides with the result of ter Elst-Ouhabaz \cite{EO:17} in the smooth case. The techniques here are different and our proof is independent from theirs. The compactness and the analyticity of angle $\nicefrac{\pi}{2}$ of the semigroup imply that the spectra $\sigma(N)$ and $\sigma(A^B)$ consist of real eigenvalues only.  

\smallskip

This paper is organized as follows. In Section~2 below we recall the abstract setting from \cite{EF:05} and \cite{BE:18} needed for our approach. Based on \cite[Sect.~2]{Eng:03}, we study in Section~3
the special case where $A_m$ is the Laplace-Beltrami operator and $B$ the normal derivative.
In Section~4 we then generalize these results to arbitrary strictly elliptic operators and their conormal derivatives. Moreover, we use this
to obtain uniqueness, existence and estimates for the solutions of the Robin-Problem. Here the main idea is to introduce a new
Riemannian metric induced by the coefficients of the second order part of the elliptic operator. Then the operator takes a simpler
form: Up to a relatively bounded perturbation of bound $0$, it coincides with 
a Laplace-Beltrami operator for the new metric. 
Regularity and perturbation theory for operator semigroups as in \cite[Sect.~4]{BE:18} then yield the first part of the main theorem in its full generality. The second part follows from \cite[Thm.~3.1]{EF:05} and \cite[Thm.~1.1]{Bin:18a}.

\medskip

In this paper the following notation is used. 
For a closed operator $T \colon D(T) \subset X \rightarrow X$ on a Banach space $X$ we denote by $[D(T)]$ the Banach space $D(T)$ 
equipped with the graph norm $\| \bullet \|_T := \| \bullet \|_X + \| T (\bullet) \|_X$ and indicate by $\hookrightarrow$ a continuous
and by $\stackrel{c}{\hookrightarrow}$ a compact embedding. 
Moreover, we use Einstein's notation of sums, i.e., 
\begin{align*}
x_k y^k := \sum_{k = 1}^n x_k y^k
\end{align*}
for $x := (x_1, \dots, x_n), y := (y_1, \dots, y_n)$.

\section*{Acknowledgments}

The author wishes to thank Professor Simon Brendle and Professor Klaus-J.\ Engel for many helpful suggestions and discussions. 

\section{The abstract Setting}

The starting point of our investigation is the abstract setting proposed first in this form by \cite{Gre:87} and successfully used, e.g., in
\cite{CENN:03}, 
\cite{CENP:05} and \cite{EF:05} for the study of boundary perturbations.

\begin{sett}\label{set:AS}
Consider
\begin{enumerate}[(i)]
\item two Banach spaces $X$ and $\dX$, called \emph{state} and 
\emph{boundary space}, respectively;
\item a densely defined \emph{maximal operator}
$A_m \colon D(A_m) \subset X \rightarrow X$;
\item a \emph{boundary (or trace) operator} $L \in \sL(X,\dX)$;
\item a \emph{feedback operator} $B \colon D(B) \subseteq X \rightarrow \dX$.
\end{enumerate}
\end{sett}

Using these spaces and operators we define the operator $A^B:D(A^B)\subset X\to X$
with \emph{generalized Wentzell boundary conditions} by
\begin{equation}\label{eq:W-BC}
A^B f := A_m f, \quad 
D(A^B):= \bigl\{ f \in D(A_m) \cap D(B) : LA_mf = Bf \bigr\} . 
\end{equation}

For our purpose we need  some more 
operators. 

\begin{nota} 
We denote the (closed) kernel of $L$ by $X_0 := \ker(L)$ and
consider the restriction $A_0$ of $A_m$ given by
\begin{alignat*}{3}
&A_{0}:D(A_0)\subset X\to X,&\quad&D(A_{0}) := \bigl\{ f \in D(A_m) : Lf = 0 \bigr\}. 
\end{alignat*}
\end{nota}

The \emph{abstract
Dirichlet operator associated with $A_m$}
is, if it exists, 
\begin{equation*}
L^{A_m}_0 := \bigl(L|_{\ker(A_m)}\bigr)^{-1} \colon \dX
\rightarrow \ker(A_m) \subseteq X,
\end{equation*}
i.e., $L^{A_m}_0 \varphi = f$ is equal to the solution of the abstract Dirichlet problem
\begin{equation}
\begin{cases}
A_m f = 0, \\
Lf = \varphi .
\end{cases}\label{Dirichlet Problem}
\end{equation}
If it is clear which operator $A_m$ is meant, we simply write $L_0$.

\smallskip

Moreover
for $\lambda \in \C$ we define the \emph{abstract
Robin operator associated with $(\lambda,A_m,B)$}
by 
\begin{equation*}
R^{A_m,B}_\lambda := \bigl((B-\lambda L)|_{\ker(A_m)}\bigr)^{-1} \colon \dX \rightarrow \ker(A_m) \cap D(B) \subseteq X,
\end{equation*}
i.e., $R^{A_m,B}_\lambda \varphi = f$ is equal to the solution of the abstract Robin problem
\begin{equation}
\begin{cases}
A_m f = 0, \\
Bf - \lambda Lf = \varphi .
\end{cases}\label{Robin Problem}
\end{equation}
If it is clear which operators $A_m$ and $B$ are meant, we simply write $R_\lambda$.

\smallbreak
Furthermore, we introduce the \emph{abstract Dirichlet-to-Neumann operator associated with $(A_m,B)$} defined by 
\begin{equation}
N^{A_m, B} \varphi:=BL^{A_m}_0 \varphi,
\quad
D(N^{A_m, B}) := \bigl\{\varphi \in \dX : L^{A_m}_0 \varphi \in D(B) \bigr\}. \label{def:N}
\end{equation}
If it is clear which operators $A_m$ and $B$ are meant, we call $N$ simply the (abstract) Dirichlet-to-Neumann operator.
This Dirichlet-to-Neumann operator is an abstract version of the operators studied  in many places, e.g., \cite{Esc:94}, \cite[Sect.~7.11]{Tay:96}
and \cite[Sect.~II.5.1]{Tay:81}.

The Dirichlet-to-Neumann and the Robin operator are connected in
the following way.

\begin{lem}\label{D-N Resolvente}
If $L_0$ exists, we have $\lambda \in \rho(N^{A_m,B})$ if and only if $R^{A_m, B}_\lambda \in \mathcal{L}(\dX,X)$ exists. If one of these conditions is satisfied, we obtain
\begin{equation*}
	R^{A_m,B}_\lambda = - L_0 R(\lambda, N^{A_m, B}) .
\end{equation*} 
\end{lem}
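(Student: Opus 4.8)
The plan is to reduce both the equivalence and the resolvent formula to a single algebraic identity on $\ker(A_m)$ relating the operators $B-\lambda L$, $L_0$ and $N$, and only afterwards to transport the relevant continuity across this identity using the boundedness of $L$ and of $L_0$.

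First I would record the two elementary consequences of $L_0=(L|_{\ker(A_m)})^{-1}$: namely $L_0Lf=f$ for every $f\in\ker(A_m)$ and $LL_0\varphi=\varphi$ for every $\varphi\in\dX$. Since $\varphi\in D(N)$ means precisely $L_0\varphi\in D(B)$, the trace operator restricts to a bijection $L\colon\ker(A_m)\cap D(B)\to D(N)$ with inverse $L_0|_{D(N)}$. This is the domain statement on which everything hinges.

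The key step is the factorization. For $f\in\ker(A_m)\cap D(B)$ put $\varphi:=Lf\in D(N)$; then $f=L_0\varphi$, hence $Bf=BL_0\varphi=N\varphi$, and therefore
\[
(B-\lambda L)f=Bf-\lambda Lf=N\varphi-\lambda\varphi=(N-\lambda)Lf .
\]
Thus $(B-\lambda L)|_{\ker(A_m)}=(N-\lambda)\circ L|_{\ker(A_m)\cap D(B)}$ as maps $\ker(A_m)\cap D(B)\to\dX$. Because the second factor is already a bijection onto $D(N)$, the left-hand side is bijective if and only if $N-\lambda\colon D(N)\to\dX$ is bijective, i.e. if and only if $\lambda\in\rho(N)$ (using $N-\lambda=-(\lambda-N)$). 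Inverting the composition in the correct order gives $R_\lambda=L_0\circ(N-\lambda)^{-1}=-L_0R(\lambda,N)$, and conversely $R(\lambda,N)=-LR_\lambda$.

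It remains to move boundedness across these formulas, which is the only analytic point. If $\lambda\in\rho(N)$ then $R(\lambda,N)\in\mathcal{L}(\dX)$, and since $L_0$ is bounded the identity $R_\lambda=-L_0R(\lambda,N)$ exhibits $R_\lambda\in\mathcal{L}(\dX,X)$. Conversely, if $R_\lambda\in\mathcal{L}(\dX,X)$ exists, then using $L\in\mathcal{L}(X,\dX)$ from the Abstract Setting the identity $R(\lambda,N)=-LR_\lambda$ shows $(\lambda-N)^{-1}\in\mathcal{L}(\dX)$, so $\lambda\in\rho(N)$. I expect the main obstacle to be purely bookkeeping rather than estimation: one must verify at each stage that the compositions act between the correct sets—that $L$ maps $\ker(A_m)\cap D(B)$ onto exactly $D(N)$ and that $N-\lambda$ is considered on $D(N)$—so that bijectivity of each factor genuinely composes to bijectivity of $(B-\lambda L)|_{\ker(A_m)}$, with the sign convention $R(\lambda,N)=(\lambda-N)^{-1}$ tracked throughout.
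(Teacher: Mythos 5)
Your proof is correct and takes essentially the same route as the paper: your factorization $(B-\lambda L)|_{\ker(A_m)} = (N-\lambda)\circ L|_{\ker(A_m)\cap D(B)}$ is precisely the paper's element-wise equivalence between the resolvent equation $\lambda\varphi - N\varphi = \psi$ and the Robin problem, and the boundedness transfer through $R_\lambda = -L_0 R(\lambda,N)$ and $R(\lambda,N) = -LR_\lambda$ using $L_0 \in \mathcal{L}(\dX,X)$ and $L \in \mathcal{L}(X,\dX)$ is the same as in the paper. Your operator-level phrasing, with the explicit observation that $L$ restricts to a bijection $\ker(A_m)\cap D(B) \to D(N)$, is if anything a cleaner write-up of the identical argument.
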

\begin{proof}
	Assume that $R_{\lambda} \in \mathcal{L}(\dX,X)$ exists.
	By the definition of $N$ the equation
	\begin{align*}
		\lambda \varphi-N \varphi = \psi  
	\end{align*}
	for $\varphi, \psi \in \dX$ is equivalent to
	\begin{align}
		\lambda L L_0 \varphi - B L_0 \varphi = \psi \label{eq:Resolvent D-N}
	\end{align}
	for $\varphi, \psi \in \dX$. This again is equivalent to
	\begin{align*}
		-R_{\lambda} \psi = L_0 \varphi .
	\end{align*}
	Therefore,  we have for $\varphi, \psi \in \dX$ the equivalence
	\begin{align*}
		\mu \varphi - N \varphi = \psi   \quad \Longleftrightarrow \quad 
		R_{\lambda} \psi = -L_0 \varphi .
	\end{align*}
		Since $R_{\lambda,\mu}: \dX\to \ker(A_m)\cap D(B)$ exists and $L_0 : \dX \to \ker(A_m)$ is an isomorphism, there exists a unique $\varphi \in D(N)$ for every $\psi \in \dX$. 
		Moreover its given by $\phi = - L R_{\lambda,\mu} \psi$ and therefore the boundedness of the inverse follows from the boundedness of $L$ and $R_{\lambda}$. The formula for the resolvent of $N$ follows, since $L|_{\ker(A_m)}$ is an isomorphism with inverse $L_0$ and the image of $R_{\lambda}$ is contained in $\ker(A_m)$. 
		\medskip 	
		
		Conversely, we assume that $\mu \in \rho(N)$. Then \eqref{eq:Resolvent D-N} has a unique solution $\varphi \in D(N)$ for every $\psi \in \dX$. 
		Considering $f := -L_0 \varphi$ we obtain a unique solution of \eqref{Robin Problem} and hence $R_{\lambda}$ exists. Boundedness follows from $R_{\lambda} = - L_0 R(\mu,N)$.  
\end{proof}

\section{Boundary problems for the Laplace-Beltrami operator}

In order to obtain a concrete realization of the above abstract objects we consider a smooth, compact, orientable Riemannian manifold $(\overline{M},g)$ with smooth boundary $\partial M$, where $g$ denotes the Riemannian metric.
Moreover, we take the Banach spaces $X := \rC(\overline{M})$ and $\partial X = \rC(\partial M)$
and as the maximal operator the Laplace-Beltrami operator 
\begin{equation}
A_m f := \Delta_M^{g} f,\quad
D(A_m) := \biggl\{ f \in \bigcap_{p > 1} \rW^{2,p}_{\loc}(M) \cap \rC(\overline{M}) \colon \Delta_{M}^g f \in \rC(\overline{M}) \biggr\}.
\label{L-B}
\end{equation}
As feedback operator we take the normal derivative 
\begin{equation}
B f := - g\bigl( \nabla_M^g f , \nu_g\bigr), \quad
D(B) := \biggl\{ f \in \bigcap_{p > 1} \rW^{2,p}_{\loc}(M) \cap \rC(\overline{M}) \colon B f \in \rC(\partial M) \biggr\},
\label{nDeriv}
\end{equation}
where $\nabla_M^g$ denotes the gradient on $M$, which in local coordinates is given as
\begin{equation*}
\bigl(\nabla_M^g f\bigr)^l = g^{kl} \partial_k f 
\end{equation*}
for $f \in \bigcap_{p > 1} W^{1,p}(M)$. 
Moreover, $\nu_g$ is the outer normal on $\partial M$ given in local coordinates by
\begin{equation*}
\nu_g^l = g^{kl}\nu_k  .
\end{equation*}
Furthermore, we choose $L$ as the trace operator, i.e.,
\begin{equation*}
L \colon X \rightarrow \dX,\  f \mapsto f|_{\partial M},
\end{equation*}
which is bounded with respect to the supremum norm.
Later on we will also need the unique bounded extension of $L$ to $\rW^{1,2}(M)$, denoted by $\overline{L} \colon \rW^{1,2}(M) \rightarrow \rL^2(\partial M)$, and call it the 
(generalized) trace operator.

\subsection{The Laplace-Beltrami operator with Robin boundary conditions}
\

In this setting we consider the Laplace-Beltrami operator with Robin boundary conditions
and prove existence, uniqueness and regularity for the solution of \eqref{Robin Problem}. 
Moreover, we show that this solution satisfies a maximum principle. 

\smallskip 

For this purpose we need the concept of a weak solution of \eqref{Robin Problem}.
If $f \in D(A_m)\cap D(B)$ is a solution of \eqref{Robin Problem} we obtain by Green's Identity
\begin{align*}
\int_M g\bigl(\nabla_M^g f , \nabla_M^g \overline{\phi}\bigr) \, \dvol_M^g 
= - \int_{\partial M} B f \overline{L}\overline{\phi} \, \dvol_{\partial M}^g
= - \int_{\partial M} \lambda \overline{L} f \overline{L} \overline{\phi} \, \dvol_{\partial M}^g 
- \int_{\partial M} \varphi \overline{L} \overline{\phi} \dvol_{\partial M}^g
\end{align*}
for all $\phi \in \rW^{1,2}(M)$. This motivates the following definition. 

\begin{defn}[Weak solution of the Robin Problem]
We call $f \in \rW^{1,2}(M)$ a \emph{weak solution of \eqref{Robin Problem}} if it satisfies
\begin{equation*}
\mathfrak{a} (f,\phi) := \int_M g\bigl(\nabla_M^g f , \nabla_M^g \overline{\phi}\bigr) \, \dvol_M^g 
+ \int_{\partial M} \lambda \overline{L} f \overline{L} \overline{\phi} \, \dvol_{\partial M}^g
= - \int_{\partial M} \varphi \overline{L} \overline{\phi} \dvol_{\partial M}^g =: F(\phi)
\end{equation*}
for all $\phi \in \rW^{1,2}(M)$. 
\end{defn}

Next we prove the existence of such weak solutions. 

\begin{lem}[Existence and Uniqueness of the weak solution of the Robin problem]\label{ex weak}
For each $\re(\lambda) > 0$ the problem \eqref{Robin Problem} has a unique weak solution. 
\end{lem}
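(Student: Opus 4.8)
The plan is to solve the variational problem by the Lax--Milgram theorem on the Hilbert space $\rW^{1,2}(M)$. Concretely, I would verify that the sesquilinear form $\mathfrak{a}$ is bounded and coercive and that $F$ is a bounded antilinear functional; the unique solvability of $\mathfrak{a}(f,\phi) = F(\phi)$ for all $\phi \in \rW^{1,2}(M)$, and hence existence and uniqueness of the weak solution, then follow immediately.

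Boundedness is routine. For the form, the interior term is estimated by Cauchy--Schwarz as $\|\nabla_M^g f\|_{\rL^2} \, \|\nabla_M^g \phi\|_{\rL^2} \le \|f\|_{\rW^{1,2}} \|\phi\|_{\rW^{1,2}}$, while the boundary term is controlled using the boundedness of the trace operator $\overline{L} \colon \rW^{1,2}(M) \to \rL^2(\partial M)$, which gives a bound of the form $|\lambda| \, \|\overline{L}\|^2 \, \|f\|_{\rW^{1,2}} \|\phi\|_{\rW^{1,2}}$. The same trace estimate, together with $\varphi \in \rC(\partial M) \subset \rL^2(\partial M)$ (recall that $\partial M$ is compact), shows $|F(\phi)| \le \|\varphi\|_{\rL^2(\partial M)} \, \|\overline{L}\| \, \|\phi\|_{\rW^{1,2}}$, so $F$ is bounded.

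The heart of the matter, and where I expect the main obstacle, is coercivity. Taking real parts and using $\re(\lambda) > 0$ yields
\begin{equation*}
\re \mathfrak{a}(f,f) = \int_M |\nabla_M^g f|^2 \, \dvol_M^g + \re(\lambda) \int_{\partial M} |\overline{L}f|^2 \, \dvol_{\partial M}^g \ge \min\{1, \re(\lambda)\} \Bigl( \|\nabla_M^g f\|_{\rL^2}^2 + \|\overline{L}f\|_{\rL^2(\partial M)}^2 \Bigr).
\end{equation*}
The difficulty is that $\|\nabla_M^g f\|_{\rL^2}$ is only a seminorm, vanishing on constants, so the right-hand side does not visibly dominate the full $\rW^{1,2}$-norm. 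What is needed is a Poincaré--trace inequality: there exists $C > 0$ with $\|f\|_{\rL^2(M)}^2 \le C\bigl(\|\nabla_M^g f\|_{\rL^2}^2 + \|\overline{L}f\|_{\rL^2(\partial M)}^2\bigr)$ for all $f \in \rW^{1,2}(M)$. Granting this and adding $\|\nabla_M^g f\|_{\rL^2}^2$ to both sides bounds $\|f\|_{\rW^{1,2}}^2$ from above by the bracketed quantity, whence $\re \mathfrak{a}(f,f) \ge c \, \|f\|_{\rW^{1,2}}^2$ and coercivity is established.

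To prove the Poincaré--trace inequality I would argue by contradiction and compactness. Suppose it fails; then there is a sequence $(f_n)$ with $\|f_n\|_{\rL^2(M)} = 1$ and $\|\nabla_M^g f_n\|_{\rL^2}^2 + \|\overline{L}f_n\|_{\rL^2(\partial M)}^2 \to 0$. This sequence is bounded in $\rW^{1,2}(M)$, so by the compactness of the embedding $\rW^{1,2}(M) \inc \rL^2(M)$ (Rellich--Kondrachov on the compact manifold $\overline{M}$) a subsequence converges in $\rL^2(M)$ to some $f$ with $\|f\|_{\rL^2(M)} = 1$. Since additionally $\nabla_M^g f_n \to 0$ in $\rL^2$, the convergence is in fact in $\rW^{1,2}(M)$ with $\nabla_M^g f = 0$, so $f$ is constant on the connected manifold $\overline{M}$; continuity of $\overline{L}$ then gives $\overline{L}f = 0$, forcing that constant to be zero and contradicting $\|f\|_{\rL^2(M)} = 1$. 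This yields the inequality and thereby coercivity, completing the verification of the Lax--Milgram hypotheses.
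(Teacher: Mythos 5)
Your proposal is correct and takes essentially the same route as the paper: both verify the Lax--Milgram hypotheses, with boundedness of $\mathfrak{a}$ and $F$ via Cauchy--Schwarz and the trace estimate, and both establish coercivity through a Rellich--Kondrachov compactness/contradiction argument in which the limit function has vanishing gradient and vanishing trace, hence is zero. The only difference is organizational: you package the compactness step as a standalone Poincar\'e--trace inequality $\| f \|_{\rL^2(M)}^2 \leq C\bigl(\| \nabla_M^g f \|_{\rL^2(M)}^2 + \| \overline{L} f \|_{\rL^2(\partial M)}^2\bigr)$ and deduce coercivity from it (with the bonus that the inequality, and hence the constant $C$, is independent of $\lambda$), whereas the paper runs the identical contradiction argument directly against the coercivity of $\mathfrak{a}$.
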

\begin{proof}
We consider $\mathfrak{a}$ and $F$ as defined above. Obviously $\mathfrak{a}$ is sesquilinear and $F$ is linear. By the Cauchy-Schwarz Inequality we have for $f, \phi \in \rW^{1,2}(M)$ that
\begin{align*}
|\mathfrak{a}(f, \phi)| \leq \| \nabla_M^g f \|_{\rL^2(M)} \| \nabla_M^g \phi \|_{\rL^2(M)} + |\lambda| \| \overline{L} f \|_{\rL^2(\partial M)} \|  \overline{L} \phi \|_{\rL^2(\partial M)}
\leq C \| f \|_{\rW^{1,2}(M)} \| \phi \|_{\rW^{1,2}(M)},
\end{align*}
hence $\mathfrak{a} \colon \rW^{1,2}(M) \times \rW^{1,2}(M) \rightarrow \C$ is bounded.
Next we show that $\mathfrak{a}$ is coercive. If not, there
exists a sequence $(u_k)_{k \in \N} \subset \rW^{1,2}(M)$ such that
\begin{align*}
\| u_k \|_{\rW^{1,2}(M)}^2 > k \re\bigl(\mathfrak{a}(u_k, u_k)\bigr)  
\end{align*}
for all $k \in \N$.
We consider
\begin{equation*}
v_k := \frac{v_k}{\| v_k \|_{\rW^{1,2}(M)}} \in \rW^{1,2}(M)
\end{equation*}
and remark that $\|v_k \|_{\rW^{1,2}(M)} = 1$ and therefore
\begin{align*}
\re\bigl(\mathfrak{a}(v_k, v_k)\bigr) < \frac{1}{k} 
\end{align*}
for all $k \in \N$. Since $(v_k)_{k \in \N}$ is bounded, by Rellich-Kondrachov (cf. \cite[Cor.~3.7]{Heb:96}) there exists a subsequence
$(v_{k_l})_{l \in \N}$ converging in $\rL^2(M)$ to $v \in \rL^2(M)$.
On the other hand we have
\begin{equation*}
\| \nabla_M^g v_{k_l} \|_{\rL^2(M)} \leq \re\bigl(\mathfrak{a}(v_{k_l}, v_{k_l})\bigr) < \frac{1}{k_l},
\end{equation*} 
hence $(\nabla_M^g v_{k_l})_{l \in \N}$ converges to $0$ in $\rL^2(M)$. 
This shows $v \in \rW^{1,2}(M)$ and $\nabla_M^g v = 0$. 
Moreover, we obtain
\begin{align*}
\| \nabla_M^g v_{k_l} \|_{\rL^2(M)}  = \int_M g_{ij} g^{ir} g^{js} \partial_r v_{k_l} \partial_s v_{k_l} \dvol_M^g
= \int_M g^{rs} \partial_r v_{k_l} \partial_s v_{k_l} \dvol_M^g
= \| \nabla v_{k_l} \|_{\rL^2(M)} , 
\end{align*}
where $\nabla v_{k_l}$ denotes the covariant derivative of $v_{k_l}$. 
Therefore,
$(v_{k_l})_{l \in \N}$ converges in $\rW^{1,2}(M)$ to $v$ with $\| v \|_{\rW^{1,2}(M)} = 1$. 
Moreover, we have
\begin{equation*}
\| \overline{L} v_{k_l} \|_{\rL^2(\partial M)} < \frac{1}{\re(\lambda) k_l}
\end{equation*}
and therefore
\begin{align*}
\| \overline{L} v \|_{\rL^2(\partial M)} \leq \| \overline{L} v - \overline{L} v_{k_l} \|_{\rL^2(\partial M)} + \| \overline{L} v_{k_l} \|_{\rL^2(\partial M)}
< \frac{1}{\re(\lambda) k_l } + C \| v - v_{k_l} \|_{\rW^{1,2}(M)} \longrightarrow 0
\end{align*}
and hence $\overline{L} v = 0$. Since $\nabla v = 0$, we conclude $v = 0$, which contradicts $\| v \|_{\rW^{1,2}(M)} = 1$. Hence, $\mathfrak{a}$ is coercive. 
Since
\begin{equation*}
| F(\phi) | \leq \| \varphi \|_{\rL^2(\partial M)} \| \overline{L} \phi \|_{\rL^2(\partial M)}
\leq C \| \phi \|_{\rW^{1,2}(\partial M)}
\end{equation*}
for all $\phi \in \rW^{1,2}(M)$ we conclude that $F \colon \rW^{1,2}(M) \rightarrow \C$ is bounded. 
By the Lax-Milgram and Fréchet-Riesz theorems 
it follows
that $\alpha(f,\phi) = F(\phi)$ for all $\phi \in \rW^{1,2}(M)$ has a unique solution $f \in \rW^{1,2}(M)$. 
\end{proof}

Next we prove that every weak solution is even a strong solution.

\begin{lem}[Regularity of the Robin problem]\label{regularity} 
Every weak solution of \eqref{Robin Problem} is a strong solution.
\end{lem}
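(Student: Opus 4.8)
The goal is to show that any weak solution $f \in \rW^{1,2}(M)$ of the Robin problem \eqref{Robin Problem} is in fact a strong solution, meaning $f \in D(A_m) \cap D(B)$, satisfies $\Delta_M^g f = 0$ in $\rC(\overline M)$, and the boundary condition $Bf - \lambda Lf = \varphi$ holds in $\rC(\partial M)$. The natural strategy is a two-stage bootstrap: first upgrade interior regularity, then treat the boundary.

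\medskip

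\emph{Interior regularity.} For any relatively compact open set contained in the interior $M$, the weak equation $\mathfrak a(f,\phi) = F(\phi)$ reduces, for test functions $\phi$ compactly supported in the interior, to saying that $f$ is a distributional solution of the homogeneous equation $\Delta_M^g f = 0$. Since the metric $g$ is smooth, the Laplace--Beltrami operator has smooth coefficients and is uniformly elliptic on compact subsets, so interior elliptic regularity (e.g. \cite[Thm.~9.11]{GT:01} applied in charts, or Weyl's lemma) gives $f \in \rW^{2,p}_{\loc}(M)$ for every $p>1$, and hence $f$ is smooth in the interior with $\Delta_M^g f = 0$ classically. This already places $f$ in the local Sobolev space appearing in the definitions \eqref{L-B} and \eqref{nDeriv}, and since $\Delta_M^g f = 0 \in \rC(\overline M)$, the interior membership condition for $D(A_m)$ is met.

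\medskip

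\emph{Boundary regularity.} The substantive part is near $\partial M$. Here I would work in boundary charts and use the full weak formulation with test functions $\phi$ not vanishing on the boundary. Integrating by parts in such a chart, the bulk term $\int_M g(\nabla_M^g f, \nabla_M^g \overline\phi)$ produces both the interior equation (already known to vanish) and a boundary term $\int_{\partial M} Bf\, \overline{L\phi}$; matching this against the boundary terms in $\mathfrak a$ and $F$ shows that $f$ satisfies the Robin condition $Bf - \lambda Lf = \varphi$ in a weak (distributional) sense on $\partial M$. To promote this to the genuine spaces, I would invoke elliptic regularity up to the boundary for the oblique/Robin boundary value problem. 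Because $\varphi \in \rC(\partial M)$ and, by the trace theorem, $\overline L f \in \rL^2(\partial M)$ with enough smoothness obtained iteratively, standard $\rL^p$ boundary regularity (e.g. \cite[Thm.~9.13]{GT:01} or Agmon--Douglis--Nirenberg-type estimates for the Neumann/Robin problem) yields $f \in \rW^{2,p}_{\loc}$ up to the boundary for all $p$, so that $Bf$ is well defined and, combined with Sobolev embedding, continuous on $\partial M$. This gives $f \in D(B)$ and the boundary identity in $\rC(\partial M)$.

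\medskip

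The main obstacle I anticipate is precisely the boundary step: getting continuity of $Bf = -g(\nabla_M^g f, \nu_g)$ up to $\partial M$ rather than mere $\rL^2$ or $\rW^{1,2}$ regularity. A single application of $\rW^{2,p}$ regularity is not immediately enough, since the boundary datum $\varphi$ is only continuous, not Hölder; I expect the argument to require a bootstrap in which improved regularity of $f$ feeds improved regularity of the effective boundary term $\lambda Lf + \varphi$, followed by Sobolev embedding $\rW^{2,p} \hookrightarrow \rC^1$ for $p$ large (using that $\dim M$ is fixed and the chart domains are bounded) to land the conormal derivative in $\rC(\partial M)$. I would also take care that the weak-to-strong passage for the boundary condition uses the generalized trace operator $\overline L$ consistently, so that the boundary integral identity can legitimately be localized and integrated by parts chart-by-chart.
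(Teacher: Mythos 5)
Your interior step coincides with the paper's: the paper cites \cite[Chap.~5, Prop.~1.6]{Tay:96} to get $f\in\rC^{2}(M)\subset\bigcap_{p>1}\rW^{2,p}_{\loc}(M)$ and then the fundamental lemma of the calculus of variations to conclude $\Delta_{M}^{g}f=0$, which is exactly what your Weyl-lemma argument produces. The genuine gap is in your boundary step. You claim that $\rL^{p}$ elliptic regularity up to the boundary gives $f\in\rW^{2,p}$ near $\partial M$ for all $p$, hence $f\in\rC^{1}(\overline{M})$ by Sobolev embedding. This is \emph{false} for merely continuous Robin data, and your proposed bootstrap cannot repair it: the effective Neumann datum $\lambda Lf+\varphi$ never becomes better than continuous, because $\varphi$ never improves, and $\rC(\partial M)$ is not contained in the trace space $\rW^{1-1/p,p}(\partial M)$ required for $\rW^{2,p}$-regularity up to the boundary once $p$ is large. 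Concretely, on the unit disk the tangential derivative of the solution $u$ of $\Delta u=0$, $\partial_{\nu}u=g$ is (up to sign) the conjugate function (Hilbert transform) of $g$ on the circle, which for general $g\in\rC(\partial D)$ is not continuous, nor even bounded; so $\nabla u$ does not extend continuously to $\overline{D}$. If your argument closed, it would prove a false statement, so no amount of iteration can save it.

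The missing idea is that no boundary regularity for $f$ is needed at all, because of how $D(B)$ in \eqref{nDeriv} is defined: membership requires only $f\in\bigcap_{p>1}\rW^{2,p}_{\loc}(M)\cap\rC(\overline{M})$ together with $Bf\in\rC(\partial M)$, where $Bf$ is the conormal derivative read off distributionally through Green's identity, not a classical boundary limit of $-g\bigl(\nabla_{M}^{g}f,\nu_{g}\bigr)$. You in fact already derived the decisive identity yourself: matching the boundary terms of the weak formulation (using $\Delta_{M}^{g}f=0$ and density of the traces $\overline{L}\phi$, $\phi\in\rW^{1,2}(M)$, in $\rL^{2}(\partial M)$) gives $Bf=\lambda Lf+\varphi$ on $\partial M$. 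At that point the proof of \autoref{regularity} is finished: the right-hand side lies in $\rC(\partial M)$ (granted continuity of $f$ up to the boundary, which the paper takes from the regularity result it cites), so $Bf\in\rC(\partial M)$, i.e.\ $f\in D(B)$, and the Robin condition holds in $\rC(\partial M)$; together with $\Delta_{M}^{g}f=0\in\rC(\overline{M})$ this puts $f$ in $D(A_{m})\cap D(B)$. This one-line reading of the boundary condition is the whole of the paper's boundary argument, and it is precisely the step your proposal replaces by an unprovable (indeed false) $\rC^{1}(\overline{M})$-regularity claim.
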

\begin{proof}
By \cite[Chap.~5., Prop.~1.6]{Tay:96} we have $f \in\rC^2(M) \subset \bigcap_{p > 1} \rW^{2,p}_{\loc}(M)$. 

Therefore, we obtain 
by the fundamental lemma of the calculus of variation 
that $\Delta_{M}^g f = 0$, in particular $\Delta_M^g f \in \rC(\overline{ M})$.
Furthermore we have
\begin{equation*}
Bf = \lambda Lf + \varphi \in \rC(\partial M) .
\qedhere
\end{equation*}
\end{proof}

Summing up we obtain the following. 

\begin{cor}[Existence and Uniqueness of the solution of the Robin problem]\label{Existence solution Robin}
For all $\re(\lambda) > 0$ the problem \eqref{Robin Problem} has a unique solution. 
\end{cor}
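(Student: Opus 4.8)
The plan is to read this off directly from the two preceding lemmas, so no new analytic work is needed; the only thing to verify is that the two notions of solution fit together. First I would establish \emph{existence}: for each $\lambda$ with $\re(\lambda) > 0$, \autoref{ex weak} furnishes a weak solution $f \in \rW^{1,2}(M)$ of \eqref{Robin Problem}, and then \autoref{regularity} upgrades this $f$ to a strong solution, i.e.\ $f \in D(A_m) \cap D(B)$ with $A_m f = 0$ and $Bf - \lambda Lf = \varphi$. This already gives at least one solution.

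For \emph{uniqueness} the key observation is that every strong solution of \eqref{Robin Problem} is automatically a weak solution. This is exactly the Green's-identity computation carried out just before the definition of the weak solution: any $f \in D(A_m) \cap D(B)$ solving \eqref{Robin Problem} satisfies
\begin{equation*}
\mathfrak{a}(f,\phi) = \int_M g\bigl(\nabla_M^g f , \nabla_M^g \overline{\phi}\bigr) \, \dvol_M^g + \int_{\partial M} \lambda \overline{L} f \overline{L} \overline{\phi} \, \dvol_{\partial M}^g = - \int_{\partial M} \varphi \overline{L} \overline{\phi} \, \dvol_{\partial M}^g = F(\phi)
\end{equation*}
for all $\phi \in \rW^{1,2}(M)$. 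Hence if $f_1, f_2$ were two strong solutions, both would be weak solutions, and the uniqueness part of \autoref{ex weak} would force $f_1 = f_2$.

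Since both halves of the statement reduce to results already proved, I expect no real obstacle here; the corollary is essentially a bookkeeping step. The only point deserving care is the compatibility of the two solution concepts: the strong-to-weak passage via Green's identity (which requires $f \in D(A_m) \cap D(B)$ so that the boundary integrals are meaningful) and the weak-to-strong passage via \autoref{regularity}. Both are available in the present setting, so the argument closes.
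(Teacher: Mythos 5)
Your proposal is correct and matches the paper's intended argument: the paper offers no separate proof beyond ``Summing up we obtain the following,'' meaning exactly the combination you describe --- existence via \autoref{ex weak} plus \autoref{regularity}, and uniqueness because the Green's-identity computation preceding the definition of weak solutions shows every strong solution is a weak one, so the uniqueness of \autoref{ex weak} applies. Your explicit attention to the strong-to-weak compatibility is the only nontrivial bookkeeping, and it is handled the same way the paper implicitly handles it.
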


We finish this subsection by showing a maximum principle for the Robin problem. 

\begin{lem}\label{Max Robin}
A solution $f \in D(A_m) \cap D(B) \subset X$ of 
\eqref{Robin Problem}
satisfies the maximum principle
\begin{align*}
| \re(\lambda) | \cdot \| f \|_X \leq \| \varphi \|_{\dX}
\end{align*}
for all $\re(\lambda) \geq 0$ and $\varphi \in \dX = \rC(\partial M)$.
\end{lem}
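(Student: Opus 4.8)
The plan is to prove this by the classical maximum principle for harmonic functions, the only extra input being the sign of the normal derivative at a boundary maximum. Since $\re(\lambda)\ge 0$ we have $|\re(\lambda)|=\re(\lambda)$, and the case $f=0$ is trivial, so assume $f\neq 0$. First I would exploit the rotational symmetry of \eqref{Robin Problem}: replacing $f$ by $e^{i\theta}f$ leaves $A_m f=\Delta_M^g f=0$ intact and turns the boundary equation $Bf-\lambda Lf=\varphi$ into the same equation with right-hand side $e^{i\theta}\varphi$, of unchanged norm. Choosing $x_0\in\overline{M}$ with $|f(x_0)|=\|f\|_X$ (possible by compactness) and rotating suitably, I may therefore assume $f(x_0)=\|f\|_X=:\rho>0$ is real and nonnegative.

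Next I would pass to the real part $u:=\re f$, which is harmonic since $\Delta_M^g f=0$. Because $u(y)\le|f(y)|\le\rho$ for all $y$ and $u(x_0)=\rho$, the point $x_0$ is a global maximum of $u$ on $\overline{M}$ with $\max_{\overline{M}}u=\rho$. If $f$ is constant, then $f\equiv\rho$, so $Bf=-g(\nabla_M^g f,\nu_g)=0$ and the boundary condition reduces to $-\lambda\rho=\varphi$; taking moduli gives $\|\varphi\|_{\dX}=|\lambda|\,\rho\ge\re(\lambda)\,\rho=|\re(\lambda)|\,\|f\|_X$, as claimed. Otherwise $f$ is non-constant, and then so is $u$: if $u$ were constant it would equal $\rho$, forcing $(\im f)^2=|f|^2-\rho^2\le 0$ and hence $f\equiv\rho$, a contradiction. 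By the strong maximum principle the harmonic function $u$ cannot attain its maximum in the interior, so $x_0\in\partial M$.

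The core of the argument is then at the boundary point $x_0$. Since $u$ attains its global maximum there and moving from $x_0$ into the interior can only decrease $u$, the outer normal derivative satisfies $g(\nabla_M^g u,\nu_g)(x_0)\ge 0$; this is the elementary one-sided estimate at a maximum, and it is in fact strict by Hopf's Lemma via the interior sphere condition of the smooth boundary, although $\ge 0$ already suffices. Taking the real part of the boundary condition $Bf-\lambda Lf=\varphi$ at $x_0$ and using $f(x_0)=\rho\in\R$ together with $\re(Bf)=-g(\nabla_M^g u,\nu_g)$, I obtain
\begin{equation*}
\re\varphi(x_0)=\re\bigl(Bf(x_0)\bigr)-\re(\lambda)\,\rho=-g\bigl(\nabla_M^g u,\nu_g\bigr)(x_0)-\re(\lambda)\,\rho\le-\re(\lambda)\,\rho .
\end{equation*}
Rearranging yields $\re(\lambda)\,\rho\le-\re\varphi(x_0)\le|\varphi(x_0)|\le\|\varphi\|_{\dX}$, which is exactly $|\re(\lambda)|\,\|f\|_X\le\|\varphi\|_{\dX}$.

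I expect the main obstacle to be the boundary regularity underpinning the normal-derivative step: one must make sure that the continuous boundary function $Bf$ genuinely coincides with the one-sided geometric normal derivative of $f$ at $x_0$, so that the sign information coming from the maximum of $u$ transfers to $\re\bigl(Bf(x_0)\bigr)$. This is where the membership $f\in D(A_m)\cap D(B)$, together with elliptic regularity up to the boundary and Hopf's Lemma (or the one-sided difference-quotient argument at the boundary maximum), has to be invoked with care; everything else is the standard interplay of the maximum principle and the Robin boundary condition.
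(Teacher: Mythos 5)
Your proof is correct and follows essentially the same route as the paper: locate a maximum point of the modulus on the boundary and combine the sign of the outer normal derivative there with the Robin condition $Bf = \lambda Lf + \varphi$. The only cosmetic difference is that the paper works directly with the subharmonic function $|f|^2$ (so no rotation trick, no case distinction, and no passage to real parts is needed), whereas you rotate to make $f(x_0)$ real and argue with the harmonic function $\re f$; both versions rest on the same boundary-regularity point you flag at the end, namely that $Bf\in\rC(\partial M)$ lets the first-order sign condition at a boundary maximum be applied.
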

\begin{proof}
We consider a point $p \in \overline{M}$, where $|f|$ and therefore $|f|^2$ assumes its maximum. 
By the interior maximum principle (cf. \cite[Thm.~9.1]{GT:01})
it follows that $p \in \partial M$. Hence,  we have
\begin{align*}
g(p)\bigl(\nabla_M^g |f|^2(p), \nu_g(p)\bigr) \geq 0 .
\end{align*}
From
\begin{align*}
g\bigl(\nabla_M^g |f|^2, \nu_g\bigr) &= g\bigl(\nabla_M^g (f \overline{f}), \nu_g\bigr) = 2 \re g\bigl((\nabla_M^g f)\overline{f}, \nu_g\bigr)
= 2 \re \Bigl(g\bigl((\nabla_M^g f), \nu_g\bigr)\overline{f}\Bigr) \\
&= -2 \re\bigl( (Bf)\overline{f}\bigr) = -2 \re\bigl(\varphi \overline{f}\bigr) - 2 \re(\lambda) |f|^2 ,
\end{align*}
we obtain
\begin{align*}
\re(\lambda) |f|^2(p) \leq -\re\bigl(\varphi(p) \overline{f}(p)\bigr) \leq |\varphi|(p) |f|(p).
\end{align*}
Since $\re(\lambda) \geq 0$, this implies
\begin{equation*}
| \re(\lambda) | \cdot \| f\|_X = |\re(\lambda) | \cdot |f|(p) \leq | \varphi|(p) \leq \| \varphi \|_{\dX} . \qedhere
\end{equation*}
\end{proof}

\subsection{Generator property for the Dirichlet-to-Neumann operator}
\

Now we are able to prove our main result: The Dirichlet-to-Neumann operator generates
a contractive and analytic semigroup of angle $\nicefrac{\pi}{2}$ on $\dX = \rC(\partial M)$. To do so we represent the Dirichlet-to-Neumann operator as a 
relatively bounded perturbation of $-\sqrt{- \Delta_{\partial M}^g}$.  

\bigskip

We first need the existence of the associated Dirichlet operator. 

\begin{lem}\label{L_0 exists}
The Dirichlet operator $L_0 \in \mathcal{L}(\partial X, X)$ exists.
\end{lem}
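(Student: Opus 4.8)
The plan is to show that the Dirichlet operator $L_0 = (L|_{\ker(A_m)})^{-1}$ exists as a bounded operator by exhibiting, for each boundary datum $\varphi \in \rC(\partial M)$, a unique solution of the Dirichlet problem \eqref{Dirichlet Problem} with $A_m = \Delta_M^g$, and then checking that the solution map is bounded for the supremum norm. In other words, I must verify that $L$ restricted to $\ker(\Delta_M^g)$ is a bijection onto $\rC(\partial M)$ with bounded inverse. I would phrase this as a classical solvability-plus-maximum-principle statement for the Laplace--Beltrami operator.

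First I would establish \emph{existence and uniqueness}. Uniqueness is immediate from the interior maximum principle: if $f \in D(A_m) = \ker(\Delta_M^g)$ with $Lf = f|_{\partial M} = 0$, then $f$ is harmonic with zero boundary values, so $\|f\|_X \le \|f|_{\partial M}\|_{\dX} = 0$ by \cite[Thm.~9.1]{GT:01}, whence $f = 0$. For existence, I would invoke the classical solvability of the Dirichlet problem for a continuous boundary datum: this is precisely the content already cited in the introduction, namely \cite[Cor.~9.18]{GT:01} (unique solvability in $\bigcap_{p>1}\rW^{2,p}_{\loc}(M) \cap \rC(\overline M)$), which is exactly the class appearing in the definition \eqref{L-B} of $D(A_m)$. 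Thus for each $\varphi \in \rC(\partial M)$ there is a unique $f = L_0\varphi \in \ker(\Delta_M^g) \subseteq D(A_m)$ with $f|_{\partial M} = \varphi$, so $L|_{\ker(A_m)}$ is a bijection.

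It then remains to prove \emph{boundedness} of $L_0$, and this is the step I would treat most carefully, though here too the tool is the maximum principle rather than any new estimate. By \cite[Thm.~9.1]{GT:01}, a harmonic function on the compact manifold $\overline M$ attains its maximum and minimum on the boundary $\partial M$, so $\|L_0\varphi\|_{\rC(\overline M)} = \|f\|_{\infty} \le \|f|_{\partial M}\|_\infty = \|\varphi\|_{\rC(\partial M)}$. This gives $\|L_0\varphi\|_X \le \|\varphi\|_{\dX}$, so $L_0$ is in fact a contraction, and in particular $L_0 \in \mathcal{L}(\partial X, X)$ as claimed. Note this is essentially the assertion already anticipated in the introduction, where the solution operator $L_0$ is stated to be bounded by the maximum principle.

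The main obstacle, if any, is not analytic depth but ensuring the solution lands in the precise domain $D(A_m)$ as defined in \eqref{L-B}: one must make sure the regularity granted by \cite[Cor.~9.18]{GT:01} matches membership in $\bigcap_{p>1}\rW^{2,p}_{\loc}(M) \cap \rC(\overline M)$ with $\Delta_M^g f = 0 \in \rC(\overline M)$, which it does since the solution is harmonic (hence smooth in the interior by elliptic regularity) and continuous up to the boundary. Everything else is a direct citation of the maximum principle. I would therefore keep the proof short, simply assembling uniqueness, solvability, and the contraction estimate into the statement that $L_0$ exists and is bounded.
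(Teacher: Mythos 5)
Your proof is correct and follows essentially the same route as the paper: classical unique solvability of the Dirichlet problem for the Laplace--Beltrami operator combined with the maximum principle \cite[Thm.~9.1]{GT:01}, which gives both injectivity of $L|_{\ker(A_m)}$ and the contraction estimate $\|L_0\varphi\|_X \leq \|\varphi\|_{\dX}$. The only difference is bibliographic: the paper's proof cites \cite[Chap.~5, (2.26)]{Tay:96} (which treats the Dirichlet problem directly on manifolds) together with \cite[Thm.~9.19]{GT:01} for regularity, whereas you invoke \cite[Cor.~9.18]{GT:01}, a statement for Euclidean domains applied here on a manifold --- a quibble the paper itself shares, since its introduction makes exactly that citation.
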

\begin{proof}
This follows by \cite[Chap.~5. (2.26)]{Tay:96}, \cite[Thm.~9.19]{GT:01} and \cite[Thm~9.1]{GT:01}. 
\end{proof}

Next we prove a first generation result for the Dirichlet-to-Neumann operator.

\begin{prop}\label{N Contractive}
The Dirichlet-to-Neumann operator $N$ defined in \eqref{def:N}
generates a contraction semigroup on $\dX$. 
\end{prop}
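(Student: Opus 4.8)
The plan is to verify the hypotheses of the Hille--Yosida generation theorem for contraction semigroups, exploiting the fact that almost all of the analytic work has already been carried out in \autoref{Existence solution Robin} and \autoref{Max Robin}. Since $L_0$ exists by \autoref{L_0 exists}, \autoref{D-N Resolvente} applies: for every $\lambda$ with $\re(\lambda)>0$ the Robin operator $R_\lambda$ exists and is bounded by \autoref{Existence solution Robin}, hence $\lambda\in\rho(N)$ and
\[
R(\lambda,N)=-L R_\lambda .
\]
In particular $(0,\infty)\subseteq\rho(N)$, so $N$ is closed with non-empty resolvent set.

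Next I would establish the contraction estimate. Writing $f:=R_\lambda\varphi$ for the solution of the Robin problem \eqref{Robin Problem}, \autoref{Max Robin} gives $\re(\lambda)\,\|f\|_X\le\|\varphi\|_{\dX}$, that is $\|R_\lambda\varphi\|_X\le\re(\lambda)^{-1}\|\varphi\|_{\dX}$. Since the trace operator $L$ is a contraction for the supremum norm and $LL_0=\Id_{\dX}$, applying $L$ to the identity above yields $R(\lambda,N)=-LR_\lambda$ and therefore
\[
\|\lambda R(\lambda,N)\|\le\lambda\,\|L\|\,\|R_\lambda\|\le 1
\qquad\text{for all }\lambda>0 ,
\]
which is precisely the resolvent bound demanded by Hille--Yosida. (Equivalently one could phrase this as dissipativity of $N$ on $\rC(\partial M)$: at a boundary point $p$ where $|\varphi|$ is maximal the harmonic extension $L_0\varphi$ attains its global maximum, and the Hopf-type inequality $g(\nabla_M^g|L_0\varphi|^2,\nu_g)(p)\ge 0$ forces $\re\bigl(\overline{\varphi(p)}(N\varphi)(p)\bigr)\le 0$, exactly as in the computation of \autoref{Max Robin} with $\lambda=0$.)

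The remaining, and genuinely new, ingredient is the density of $D(N)$ in $\dX=\rC(\partial M)$. For this I would show $\rC^\infty(\partial M)\subseteq D(N)$: given smooth boundary data $\varphi$, elliptic regularity up to the boundary for the Dirichlet problem \eqref{eq:Dir-Prob} on the smooth manifold $\overline{M}$ makes the harmonic extension $L_0\varphi$ smooth up to $\partial M$, so its conormal derivative $BL_0\varphi$ is continuous and $\varphi\in D(N)$. As $\rC^\infty(\partial M)$ is dense in $\rC(\partial M)$, the domain $D(N)$ is dense, and the Hille--Yosida theorem then shows that $N$ generates a contraction semigroup. I expect the density step to be the main obstacle, since unlike the resolvent set and the contraction estimate it is not a formal consequence of the preceding lemmas: it requires the full boundary regularity of the Dirichlet problem, and one must ensure that smoothness of the data propagates all the way to \emph{continuity of the conormal derivative}, rather than merely to membership in $\bigcap_{p>1}\rW^{2,p}_{\loc}(M)$.
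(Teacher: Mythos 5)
Your proposal is correct and follows essentially the same route as the paper: density of $D(N)$ via elliptic regularity applied to smooth boundary data, existence of $R(\lambda,N)$ for $\re(\lambda)>0$ from \autoref{D-N Resolvente} together with \autoref{Existence solution Robin}, the resolvent bound from \autoref{Max Robin}, and conclusion by Hille--Yosida. The only cosmetic differences are that the paper uses $\rC^2(\partial M)$ data with $L_0\rC^2(\partial M)\subset\rC^1(\overline{M})\subset D(B)$ instead of $\rC^\infty(\partial M)$, and phrases the norm transfer via the isometry of $L|_{\ker(A_m)}$ rather than via $\|L\|\le 1$; both are equivalent in substance.
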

\begin{proof}
By elliptic regularity 
theory (cf. \cite[Chap.~5.5. Ex.~2]{Tay:96}), 
we have the inclusions
\begin{equation*}
L_0\rC^2(\partial M) \subset\rC^1(\overline{M}) \subset D(B).
\end{equation*} 
Since $\rC^2(\partial M)$ is dense in $\dX$, $N$ is densely defined. 
By \autoref{D-N Resolvente} and \autoref{Existence solution Robin} 
it follows that the resolvent 
$R(\lambda, N)$ exists for all $\re(\lambda) > 0$.
By the interior maximum principle $L|_{\ker(A_m)} \colon \ker(A_m) \subset X \rightarrow \dX$ is an isometry. 
Therefore, \autoref{D-N Resolvente} and \autoref{Max Robin} imply
\begin{align*}
\bigl\| R(\lambda, N) \varphi \bigr\|_{\dX} \leq \frac{1}{| \re(\lambda) |} \| \varphi \|_{\dX}
\end{align*}
for all $\re(\lambda) > 0$ and $\varphi \in \dX$. Hence, the claim follows by the Hille-Yosida Theorem (cf. \cite[Thm.~II.3.5]{EN:00}).
\end{proof}

Now we prove the main result of this subsection.

\begin{thm}\label{mainthm}
The Dirichlet-to-Neumann operator $N$ given by \eqref{def:N} for \eqref{L-B} and \eqref{nDeriv} generates an analytic semigroup of 
angle $\nicefrac{\pi}{2}$ on $\dX$. 
\end{thm}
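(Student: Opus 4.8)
The plan is to realize $N$ as a lower-order perturbation of the negative square root of the Laplace--Beltrami operator on the boundary. By \autoref{N Contractive} we already know that $N$ is densely defined, generates a contraction semigroup, and satisfies $\{\re(\lambda)>0\}\subset\rho(N)$ with $\|R(\lambda,N)\|\le 1/|\re(\lambda)|$; the remaining task is to upgrade this to analyticity of the maximal angle $\nicefrac{\pi}{2}$, that is, to sectoriality of $N$ on every subsector of $\C\setminus(-\infty,0]$.

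First I would set up the model operator. The Laplace--Beltrami operator $\Delta_{\partial M}^g$ on the closed manifold $\partial M$ generates a positive, contractive analytic semigroup of angle $\nicefrac{\pi}{2}$ on $\rC(\partial M)$, so $-\Delta_{\partial M}^g$ is sectorial of spectral angle $0$. By the theory of fractional powers --- equivalently by Bochner subordination with the one-sided stable density --- the square root $(-\Delta_{\partial M}^g)^{1/2}$ is again sectorial of spectral angle $0$, and hence $-(-\Delta_{\partial M}^g)^{1/2}$ generates an analytic semigroup of angle $\nicefrac{\pi}{2}$ on $\rC(\partial M)$. This is the comparison operator with exactly the desired angle.

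The decisive step is to compare $N$ with $-(-\Delta_{\partial M}^g)^{1/2}$. Both are first-order operators on $\partial M$, and a symbol computation for the harmonic extension $L_0$ shows that they have the same principal symbol $|\xi|_g$; therefore $P:=N+(-\Delta_{\partial M}^g)^{1/2}$ is of order $\le 0$. I would then prove that $P$ is relatively $(-\Delta_{\partial M}^g)^{1/2}$-bounded with relative bound $0$ on the common domain, and invoke the perturbation theorem for analytic semigroups. The relative bound $0$ --- rather than mere smallness --- is precisely what delivers the full angle: on the subsector of half-angle $\pi-\eps$ the sectoriality constant of $-(-\Delta_{\partial M}^g)^{1/2}$ is finite but blows up as $\eps\to 0$, so convergence of the Neumann series for $\bigl(I-PR(\lambda,(-\Delta_{\partial M}^g)^{1/2})\bigr)^{-1}$ requires the relative bound to beat this constant, which is possible for every $\eps$ exactly because it can be chosen arbitrarily small. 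This yields that $N=-(-\Delta_{\partial M}^g)^{1/2}+P$ generates an analytic semigroup of angle $\nicefrac{\pi}{2}$.

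The main obstacle is the relative bound $0$ estimate for $P$ on the space of \emph{continuous} functions. On $\rL^p$ with $1<p<\infty$ an order-$0$ pseudodifferential operator is bounded, but this fails on $\rC(\partial M)=\rC^0$, so one cannot simply quote $\rL^p$-boundedness; one must instead establish $\|P\varphi\|_{\rC(\partial M)}\le\eps\,\|(-\Delta_{\partial M}^g)^{1/2}\varphi\|_{\rC(\partial M)}+C_\eps\|\varphi\|_{\rC(\partial M)}$ for every $\eps>0$, using the interior regularity of $L_0\varphi$ near $\partial M$ and the identification $D(N)=D((-\Delta_{\partial M}^g)^{1/2})$. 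Verifying this domain identification together with the bound-$0$ property in the $\rC$-setting is where the real work lies; the generation statement is then a formal consequence of the perturbation theorem.
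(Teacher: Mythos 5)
Your outline reproduces the paper's strategy: take $W:=-\sqrt{-\Delta_{\partial M}^g}$ as comparison operator (analytic of angle $\nicefrac{\pi}{2}$ via fractional powers), observe that the Dirichlet-to-Neumann operator has the same principal symbol, so that $P:=N-W$ is a pseudodifferential operator of order $\le 0$, prove $P$ is relatively $W$-bounded with bound $0$, and invoke the perturbation theorem for analytic generators. You even locate the crux correctly: an order-$0$ pseudodifferential operator is \emph{not} bounded on $\rC(\partial M)$, so the relative bound cannot be quoted from $\rL^p$ theory. But at exactly that point your proposal stops. You state the estimate that "must be established" and gesture at "interior regularity of $L_0\varphi$ near $\partial M$" and a domain identification $D(N)=D\bigl((-\Delta_{\partial M}^g)^{1/2}\bigr)$, without giving any argument for either. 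Since the remaining ingredients (the contraction semigroup from \autoref{N Contractive}, generation by $W$, the perturbation lemma) are standard, this deferred step \emph{is} the theorem, and leaving it open is a genuine gap rather than a routine verification.

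For comparison, here is how the paper closes it, and what your sketch additionally misses. First, the symbol calculus lives on the $\rL^2$/Sobolev scale, so all pseudodifferential statements are made for the closures $\overline{N}$, $\overline{W}$ in $Y=\rL^2(\partial M)$; two separate identification steps (a dissipativity-plus-surjectivity argument for $N$, a core argument for $W$) are needed to show $\overline{N}|_{\dX}=N$ and $\overline{W}|_{\dX}=W$ before any symbol computation says anything about the operators acting on $\rC(\partial M)$ — your proposal skips this bookkeeping entirely. Second, and decisively, the bound-$0$ estimate does not come from studying $L_0\varphi$ near the boundary; it comes from a function-space detour: order-$0$ pseudodifferential operators \emph{are} bounded on the H\"older spaces $\rC^\alpha(\partial M)$, $0<\alpha<1$, and the domain of $W$ embeds compactly there, $[D(W)]\inc\rC^\alpha(\partial M)$, because $(1+\overline{W})^{-1}\in\mathrm{OPS}^{-1}(\partial M)$ maps $\rC(\partial M)$ into $\rW^{1,p}(\partial M)$ for every $p>1$ and Rellich's theorem applies. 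Ehrling's lemma then yields, for every $\eps>0$,
\begin{equation*}
\| P\varphi \|_{\infty} \le \| P\varphi \|_{\rC^\alpha(\partial M)} \le C \| \varphi \|_{\rC^\alpha(\partial M)} \le \eps \| \varphi \|_{W} + C_\eps \| \varphi \|_{\infty},
\qquad \varphi \in D(W),
\end{equation*}
which is the relative bound $0$. Note also that no a priori identification $D(N)=D(W)$ is required: one only obtains the inclusion $N \supseteq W+P$ on $\dX$, and equality (hence equality of domains) is recovered \emph{a posteriori} because $\rho(N)$ and $\rho(W+P)$ both contain a common halfplane. Without this H\"older-space device, or a concrete substitute for it, your outline does not constitute a proof.
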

We proceed as in the proof of \cite[Thm.~2.1]{Eng:03}.
Let $\overline{N}$ and $\overline{ W}$ be the closure of $N$ and $W$, respectively, in $Y := \rL^2(\partial M)$.
Moreover we need results from the theory of pseudo differential operators. We use the notation from \cite{Tay:81} and denote 
by $\text{OPS}^k(\partial M)$ the pseudo differential operators of order $k \in \Z$ on $\partial M$.

\begin{step}
Then the part $\overline{N}|_{\dX}$ coincides with $N$.
\end{step}
\begin{proof}
By \autoref{N Contractive} the Dirichlet-to-Neumann operator $N$ is densely defined and $\lambda - N$, considered as an operator on $Y$, has dense range
$\rg(\lambda - N) = \dX \subset Y$ for all $\lambda > 0$. 
By Green's Identity 
we have
\begin{align*}
\int_M g\bigl( \nabla_M^g f, \nabla_M^g f\bigr) \, d\text{vol}_{M}
+ \int_M  f \Delta_M f \, d\text{vol}_{M}
= \int_{\partial M} g\bigl(\nabla_M^g f , \nu_g\bigr) Lf
\, d\text{vol}_{\partial M} . 
\end{align*}
Hence, for $f := L^{A_m}_0 \varphi$ with $\varphi \in D(N)$ we obtain
\begin{align*}
0 \leq \int_M g\bigl( \nabla_M^g f, \nabla_M^g f\bigr) \, d\text{vol}_{M}
= - \int_{\partial M} \varphi N\varphi \, d\text{vol}_{\partial M} 
\end{align*}
since $\Delta_{M}^g f = 0$. Hence, $N$ as an operator on $Y$ is
dissipative. 
By the Lumer-Phillips theorem (see \cite[Thm.~II.3.15]{EN:00}) the closure $\overline{ N}$ of $N$ exists and generates a contraction semigroup on $Y$. This implies that on $\dX$ we have
\begin{align*}
(1-N) \subseteq (1- \overline{N})|_{\dX} , 
\end{align*}
where $1- N$ is surjective and $1-\overline{ N}$ is injective on $\dX$. This is possible only if the domains
$D(1- N)$ and $D(1-\overline{ N})$ coincide,
i.e., $\overline{ N}|_{\dX} = N$. 
\end{proof}

\begin{step}
The operator $W := -\sqrt{- \Delta_{\partial M}^g}$ generates an analytic semigroup of angle $\nicefrac{\pi}{2}$ on
$\dX$. 
\end{step}
\begin{proof}
The Laplace-Beltrami operator $\Delta_{\partial M}^g$ generates an analytic semigroup of angle $\nicefrac{\pi}{2}$ on 
$\rC(\partial M) = \dX$. Hence, the assertion follows by \cite[Thm.~3.8.3]{ABHN:11}.
\end{proof}

\begin{step}
The operator $\overline{W} := - \sqrt{ - \overline{ \Delta_{\partial M}^g}}$ satisfies
$W = \overline{W}|_{\partial X}$. 
\end{step}
\begin{proof}
By \cite[Chap.~8, Prop.~2.4]{Tay:81} the space $\rC^\infty(\partial M)$ is a core for
$W$ and by \cite[Prop.~3.8.2]{ABHN:11}
the domain $D(\overline{ \Delta_{\partial M}^g})$ is a core for 
$\overline{ W}$. Hence,  $\rC^\infty(\partial M)$ is a core for $\overline{W}$ and
since $\rC^\infty (\partial M) \subset D(W)$ we obtain that 
$D(W)$ is a core for $\overline{W}$ on $Y$. This implies that $\overline{W}$ is indeed
the closure of $W$ in $Y$. Moreover, we obtain
\begin{align*}
(1-W) \subseteq (1- \overline{W})|_{\dX} , 
\end{align*}
where $1- W$ is surjective and $1-\overline{W}$ is injective on $\dX$. This is possible only if for the domains we have
\begin{align*}
D(1- W) = D(1-\overline{W}),
\end{align*}
i.e., $\overline{W}|_{\dX} = W$. 
\end{proof}

\begin{step}
The domain of $W$ can be compactly embedded into the Hölder continuous functions, i.e.,
$[D(W)] \stackrel{c}{\hookrightarrow}\rC^\alpha(M)$ for all $\alpha \in (0,1)$. 
\end{step}
\begin{proof}
Consider $\overline{R} := (1+\overline{W})^{-1}$. Then, by \cite[Chap.~XII.1]{Tay:81}, $\overline{R} \in \text{OPS}^{-1}(\partial M)$
and
since $\varphi \in \dX = \rC(\partial M)$ 
we have by \cite[Chap.~XI, Thm.~2.5]{Tay:81} that $\overline{R}\varphi \in
\rW^{1,p}(\partial M)$ for all $p > 1$. 
Hence, $D(W) = \overline{R} \rC(\partial M) \subset \rW^{1,p}(\partial M)$. 
Moreover, by Sobolev embedding (see \cite[Chap.~V. and Rem.~5.5.2]{Ada:75})
\begin{align*}
\rW^{1,p}(\partial M) \hookrightarrow \rC(\partial M)
\end{align*}
for $p > n -1$. By the closed graph theorem we obtain
\begin{equation*}
[D(W)] \hookrightarrow \rW^{1,p}(\partial M) 
\end{equation*}
for $p > n -1$. Since Rellich's embedding (see \cite[Thm.~6.2, Part III.]{Ada:75}) implies
\begin{equation*}
\rW^{1,p}(\partial M) \stackrel{c}{\hookrightarrow}\rC^\alpha(\partial M)
\end{equation*}
for $p > \frac{n-1}{1-\alpha}$, the claim follows. 
\end{proof}

\begin{step}
The difference 
$\overline{P} := \overline{N} - \overline{W} \in \text{OPS}^0(\partial M)$ is a pseudo differential operator of order $0$. Moreover,
$\overline{P}$ considered as an operator on $Y$ is bounded. 
\end{step}
\begin{proof}
This follows from \cite[App.~C, (C.4)]{Tay:81} and \cite[Chap.~XI, Thm.~2.2]{Tay:96}.
\end{proof}

\begin{step}
The part
$P := \overline{P}|_{\rC^\alpha(\partial M)} \colon {\rC^\alpha(\partial M)}\rightarrow {\rC^\alpha(\partial M)}$ is bounded. Moreover, the operator $P$ considered on $\dX$ is relatively $W$-bounded with bound $0$.
\end{step}
\begin{proof}
Form \cite[Chap. XI, Thm 2.2]{Tay:81} it follows $P \in \mathcal{L}(\rC^\alpha(\partial M))$. By \textbf{Step 4} we have
\begin{equation}
[D(W)] \stackrel{c}{\hookrightarrow}\rC^\alpha(\partial M)
\hookrightarrow \rC(\partial M) . \label{W compact}
\end{equation}
Therefore, by Ehrling's lemma (cf. \cite[Thm.~6.99]{RR:04}), for every $\varepsilon >0$ there exists a constant $C_\varepsilon >0$ such that
\begin{equation*}
\| \varphi \|_{\rC^\alpha(\partial M)} \leq \varepsilon \| \varphi \|_W + C_\varepsilon \| \varphi \|_\infty 
\end{equation*}
for every $x \in D(W)$, i.e. $P$ is relatively $W$-bounded with bound $0$. 
\end{proof}

\begin{step}(Proof of \autoref{mainthm})
\end{step}
\begin{proof}
First we note that by \textbf{Step 5} we have
\begin{equation*}
\overline{N} = \overline{ W} - \overline{P} ,
\end{equation*}
and therefore using the \textbf{Steps 1, 3, 6} it follows that
\begin{equation}
N = \overline{N}|_{\dX} 
= ( \overline{W} - \overline{P})|_{\dX}
\supseteq \overline{W}|_{\dX} - P = W - P . \label{eq:1}
\end{equation}
On the other hand, 
by \textbf{Steps 2, 6} and \cite[Lem.~III.2.6]{EN:00}, $W - P$ generates
an analytic semigroup of angle $\nicefrac{\pi}{2}$ on $\dX$. Moreover, $\lambda
\in \rho(N) \cap \rho(W-P)$ for $\lambda$ large enough. This
implies equality in \eqref{eq:1} and hence the claim.  
\end{proof} 

\begin{cor}\label{N compact}
The Dirichlet-to-Neumann operator generates a compact semigroup on $\rC(\partial M)$. 
\end{cor}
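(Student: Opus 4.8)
The plan is to use the standard characterization of compact semigroups: an immediately norm-continuous semigroup is immediately compact precisely when its generator has compact resolvent. Since \autoref{mainthm} already provides that $N$ generates an analytic semigroup of angle $\nicefrac{\pi}{2}$, and analytic semigroups are immediately norm continuous, it suffices to show that the resolvent $R(\lambda, N)$ is a compact operator on $\dX = \rC(\partial M)$ for some (equivalently every) $\lambda \in \rho(N)$.

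To establish resolvent compactness I would exploit the decomposition obtained in the proof of \autoref{mainthm}, namely the equality $N = W - P$ on $\dX$, where $W = -\sqrt{-\Delta_{\partial M}^g}$ and $P$ is relatively $W$-bounded of bound $0$ by \textbf{Step 6}. Since a relatively bounded perturbation of bound $0$ leaves the domain unchanged, we obtain $D(N) = D(W)$ with equivalent graph norms, so that $[D(N)] = [D(W)]$ as Banach spaces. Consequently, for $\lambda \in \rho(N)$ the resolvent $R(\lambda, N)$ maps $\rC(\partial M)$ boundedly into $[D(W)]$, the boundedness being the usual estimate $\| R(\lambda, N)\varphi \|_{W} \leq C \| \varphi \|_\infty$.

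Now I would invoke \textbf{Step 4}, which supplies the compact embedding $[D(W)] \stackrel{c}{\hookrightarrow} \rC^\alpha(\partial M) \hookrightarrow \rC(\partial M)$. Composing the bounded operator $R(\lambda, N) \colon \rC(\partial M) \to [D(W)]$ with this compact embedding shows that $R(\lambda, N)$, regarded as an operator on $\rC(\partial M)$, is compact. Combining immediate norm continuity (from analyticity) with the compactness of the resolvent, the characterization of immediately compact semigroups (e.g.\ \cite[Thm.~II.4.29]{EN:00}) then yields that the semigroup generated by $N$ is compact, which is the assertion of the corollary.

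The only point requiring genuine care is the identification $D(N) = D(W)$, since it is this equality of domains that lets us transport the compact embedding of \textbf{Step 4} to $R(\lambda, N)$. This rests entirely on the bound-$0$ relative boundedness of $P$ from \textbf{Step 6} together with the exact operator equality $N = W - P$ established at the very end of the proof of \autoref{mainthm}, so no new analytic input is needed beyond what is already available; everything else is the routine composition of a bounded map with a compact embedding.
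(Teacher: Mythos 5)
Your proposal is correct and takes essentially the same route as the paper: both arguments obtain compactness of the resolvent of $N$ from the compact embedding $[D(W)] \stackrel{c}{\hookrightarrow} \rC(\partial M)$ of \textbf{Step 4} together with the bound-$0$ perturbation $N = W - P$ from \textbf{Step 6}, and then conclude with analyticity (\autoref{mainthm}) and the characterization of immediately compact semigroups in \cite[Thm.~II.4.29]{EN:00}. The only difference is cosmetic: where the paper cites \cite[III.-(2.5)]{EN:00} for the stability of compact resolvent under a relatively bounded perturbation of bound $0$, you prove that fact by hand via $D(N) = D(W)$ with equivalent graph norms, which is a valid inlining of the same argument.
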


\begin{proof}
By \eqref{W compact} the operator $W$ has compact resolvent. Since the Dirichlet-to-Neumann operator $N$ and $W$ differ only by a relatively bounded perturbation of bound $0$, it has compact resolvent by \cite[III.-(2.5)]{EN:00}. 
Hence the claim follows by  \autoref{mainthm} and \cite[Thm.~II.4.29]{EN:00}. 
\end{proof}

\begin{rem}\label{remark beta}
We can insert a strictly positive function $0<\beta \in\rC(\partial M)$ and consider $\tilde{B} := \beta \cdot B$. 
Then by multiplicative perturbation theory (cf. \cite[Sect.~III.1]{Hol:92}) the same generation result as above holds true.  

\end{rem}

\subsection{The Laplace-Beltrami operator with Wentzell boundary conditions}
\

In this subsection we study the Laplace-Beltrami operator with Wentzell boundary conditions and prove that it generates an analytic semigroup of angle $\nicefrac{\pi}{2}$ on $X = \rC(\overline{M})$.
To show this, we verify the assumptions of \cite[Thm.~3.1]{BE:18}.

\begin{lem}\label{B rel. A_0 bounded}
The feedback operator $B$ is relatively $A_0$-bounded with bound $0$. 
\end{lem}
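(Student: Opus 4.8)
The plan is to show the two ingredients of relative boundedness with bound $0$: first that $D(A_0) \subseteq D(B)$ with the graph norm controlling $Bf$, and then that the normal derivative can be interpolated strictly between the second-order information $A_0 f = \Delta_M^g f$ and $f$ itself. First I would invoke global elliptic $\rL^p$-regularity up to the boundary for the Dirichlet problem: since any $f \in D(A_0)$ satisfies $f|_{\partial M} = 0$ and $\Delta_M^g f \in \rC(\overline{M})$, the Calderón--Zygmund estimate (cf.\ \cite[Thm.~9.13]{GT:01}) upgrades the local $\rW^{2,p}_{\loc}$-membership to global $f \in \rW^{2,p}(M)$ for every $p>1$, with
\[
\| f \|_{\rW^{2,p}(M)} \leq C\bigl( \| \Delta_M^g f \|_{\rL^p(M)} + \| f \|_{\rL^p(M)} \bigr) \leq C' \bigl( \| A_0 f \|_{\rC(\overline{M})} + \| f \|_{\rC(\overline{M})} \bigr),
\]
where I used that $M$ is compact so $\|\cdot\|_{\rL^p} \leq C \|\cdot\|_\infty$. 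For $p>n$ Sobolev embedding gives $\rW^{2,p}(M) \hookrightarrow \rC^1(\overline{M})$, so $Bf = -g(\nabla_M^g f, \nu_g)|_{\partial M}$ is well defined and continuous; this already establishes $D(A_0) \subseteq D(B)$ together with relative boundedness by \emph{some} constant.

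To pass to bound $0$, the plan is to interpolate. Fix $p>n$ and choose $s \in \bigl(1 + \tfrac{n}{p},\, 2\bigr)$, which is nonempty precisely because $p>n$. The trace theorem on the manifold then yields a continuous trace of the gradient, so that $\| Bf \|_{\rC(\partial M)} \leq C \| f \|_{\rW^{s,p}(M)}$, while the Gagliardo--Nirenberg interpolation inequality on the compact manifold (cf.\ the Sobolev machinery in \cite{Heb:96}) gives $\| f \|_{\rW^{s,p}(M)} \leq C \| f \|_{\rW^{2,p}(M)}^{s/2}\, \| f \|_{\rL^p(M)}^{1 - s/2}$. Combining these with the elliptic estimate above and writing $a := \| A_0 f \|_{\rC(\overline{M})}$, $b := \| f \|_{\rC(\overline{M})}$, I obtain $\| Bf \|_{\rC(\partial M)} \leq C\, a^{s/2} b^{1 - s/2} + C b$.

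Finally, since $s/2 < 1$, Young's inequality with exponents $2/s$ and $2/(2-s)$ converts $a^{s/2}b^{1-s/2}$ into $\varepsilon a + C_\varepsilon b$ for every $\varepsilon>0$, which is exactly the assertion $\| Bf \|_{\dX} \leq \varepsilon \| A_0 f \|_X + C_\varepsilon \| f \|_X$. An alternative, more abstract route would be to use that $A_0$ generates an analytic semigroup on $\rC(\overline{M})$ by \cite{Bin:18a}, to identify the fractional domain $D((\omega - A_0)^\theta)$ for $\theta \in (\tfrac12,1)$ with a (little) Hölder space on which the normal derivative trace is bounded, and then apply the moment inequality $\|(\omega-A_0)^\theta f\| \leq \varepsilon \|A_0 f\| + C_\varepsilon \|f\|$; I would keep the elementary route as primary since it matches the tools already used in the proof of \autoref{mainthm}. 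The main obstacle I anticipate is the exponent bookkeeping: one must simultaneously keep $s$ above $1 + n/p$ so that the first-order boundary trace is continuous, and strictly below $2$ so that the interpolation exponent $s/2$ stays below $1$ and Young's inequality actually delivers an arbitrarily small coefficient on $\|A_0 f\|$; this is what forces $p$ to be taken large and is the only genuinely delicate point.
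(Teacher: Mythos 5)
Your proposal is correct, and its first half is exactly the paper's: both arguments start from global elliptic $\rL^p$-regularity for the Dirichlet problem (the paper cites Taylor, you cite Gilbarg--Trudinger) plus the closed graph theorem to get $[D(A_0)] \hookrightarrow \rW^{2,p}(M)$, and then embed into $\rC^1(\overline{M})$ for $p$ large, so that $B \in \mathcal{L}(\rC^1(\overline{M}),\dX)$ gives relative boundedness with \emph{some} constant. Where you genuinely diverge is the mechanism that forces the bound to be $0$. The paper observes that the embedding is in fact \emph{compact} (Rellich: $\rW^{2,p}(M) \stackrel{c}{\hookrightarrow} \rC^{1,\alpha}(M) \hookrightarrow \rC^1(\overline{M})$), so that $[D(A_0)] \stackrel{c}{\hookrightarrow} \rC^1(\overline{M}) \hookrightarrow \rC(\overline{M})$, and then applies Ehrling's lemma to obtain $\| f \|_{\rC^1(\overline{M})} \leq \varepsilon \| f \|_{A_0} + C_\varepsilon \| f \|_X$ with no exponent bookkeeping at all --- the same soft device it uses for the operator $W$ in the proof of \autoref{mainthm}. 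You instead stay quantitative: fractional interpolation $\| f \|_{\rW^{s,p}} \leq C \| f \|_{\rW^{2,p}}^{s/2} \| f \|_{\rL^p}^{1-s/2}$ with $1+\tfrac{n}{p} < s < 2$, followed by Young's inequality. This is valid, but the one loose point is your citation: Hebey's book treats integer-order Sobolev spaces, whereas the inequality you need for fractional $s$ on a compact manifold with boundary really comes from interpolation-space theory (identifying $\rW^{s,p}$, resp.\ $H^{s,p}$, as a real, resp.\ complex, interpolation space between $\rL^p(M)$ and $\rW^{2,p}(M)$) together with a fractional Morrey embedding; these are standard but not elementary. If you want to keep your route self-contained, you can avoid fractional spaces entirely by using the classical integer-order Gagliardo--Nirenberg inequality $\| \nabla_M^g f \|_{\infty} \leq C \| f \|_{\rW^{2,p}(M)}^{\theta} \| f \|_{\infty}^{1-\theta}$ with $\theta = \tfrac{p}{2p-n} < 1$ for $p > n$, and then apply Young exactly as you do. In exchange for its softness, the paper's compactness route also produces the compact embedding $[D(A_0)] \stackrel{c}{\hookrightarrow} \rC(\overline{M})$ as a byproduct, which is the pattern reused for the compactness assertions elsewhere in the paper, while your argument yields an explicit, constructive $\varepsilon$-estimate.
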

\begin{proof}
By \cite[Chap.~5., Thm.~1.3]{Tay:96} 
 and the closed graph theorem we obtain
\begin{align*}
[D(A_0)] \hookrightarrow \rW^{2,p}(M) .
\end{align*}
Rellich's embedding (see \cite[Thm.~6.2, Part III.]{Ada:75}) implies
\begin{equation*}
\rW^{2,p}(M) \stackrel{c}{\hookrightarrow}\rC^{1,\alpha}(M) \stackrel{c}{\hookrightarrow}\rC^1(\overline{M})
\end{equation*}
for $p > \frac{m-1}{1-\alpha}$, so we obtain
\begin{align*}
[D(A_0)] \stackrel{c}{\hookrightarrow}\rC^1(\overline{M}) \hookrightarrow \rC(\overline{M}) . 
\end{align*}
Therefore, by Ehrling's lemma (cf. \cite[Thm.~6.99]{RR:04}), for every $\varepsilon >0$ there exists a constant $C_\varepsilon >0$ such that
\begin{equation*}
\| f \|_{\rC^1(\overline{M})} \leq \varepsilon \| f \|_{A_0} + C_\varepsilon \| f \|_X
\end{equation*}
for every $f \in D(A_0)$. Since $B \in \mathcal{L}(\rC^1(\overline{M}),\dX)$,
this implies the claim. 
\end{proof}

Now we prove the generator result for the operator with Wentzell boundary conditions.

\begin{thm}\label{Gen Wentzell}
The operator $A^B$ with Wentzell boundary conditions given by \eqref{eq:W-BC} for \eqref{L-B} and \eqref{nDeriv} generates a compact and analytic semigroup of angle $\nicefrac{\pi}{2}$ on $X = \rC(\overline{M})$. 
\end{thm}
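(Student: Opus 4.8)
The plan is to apply the abstract boundary-perturbation result \cite[Thm.~3.1]{BE:18}, whose hypotheses we have been systematically verifying. The key observation is that the generation property for the operator $A^B$ with Wentzell boundary conditions follows from three ingredients, each already established: first, that the Dirichlet operator $A_0$ (the Laplace--Beltrami operator with Dirichlet boundary conditions) generates a compact, analytic semigroup of angle $\nicefrac{\pi}{2}$ on $X = \rC(\overline{M})$; second, that the associated Dirichlet-to-Neumann operator $N$ generates a compact, analytic semigroup of angle $\nicefrac{\pi}{2}$ on $\dX = \rC(\partial M)$, which is precisely the content of \autoref{mainthm} and \autoref{N compact}; and third, that the feedback operator $B$ is relatively $A_0$-bounded with bound $0$, which is \autoref{B rel. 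A_0 bounded}.

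First I would invoke \cite[Thm.~1.1]{Bin:18a} to obtain that $A_0$, the Laplace--Beltrami operator with Dirichlet boundary conditions, is sectorial of angle $\nicefrac{\pi}{2}$ with compact resolvent on $\rC(\overline{M})$, hence generates a compact and analytic semigroup of angle $\nicefrac{\pi}{2}$. Next I would collect \autoref{mainthm}, \autoref{N compact}, and \autoref{B rel. A_0 bounded} to certify that the remaining hypotheses of \cite[Thm.~3.1]{BE:18} are met: the Dirichlet-to-Neumann operator generates an analytic semigroup of the optimal angle and is itself a generator of a compact semigroup, while the relative boundedness of $B$ with bound $0$ guarantees that the coupling between interior and boundary dynamics is a controlled perturbation.

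With these pieces in place, the generation theorem \cite[Thm.~3.1]{BE:18} applies directly and yields that $A^B$ generates an analytic semigroup of angle $\nicefrac{\pi}{2}$ on $X$. For the compactness of the semigroup, I would argue that the resolvent of $A^B$ is compact: this follows because the resolvent can be expressed (via the abstract decomposition into interior and boundary dynamics) in terms of the resolvents of $A_0$ and $N$ together with the Dirichlet operator $L_0$, all of which are compact or bounded in the appropriate way; compactness of the semigroup then follows from \cite[Thm.~II.4.29]{EN:00}, exactly as in the proof of \autoref{N compact}.

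The main obstacle, and the only genuinely nontrivial point, is confirming that the abstract hypotheses of \cite[Thm.~3.1]{BE:18} are stated in exactly the form our concrete operators satisfy — in particular that the compatibility conditions on $A_m$, $L$, $B$, and the angle-$\nicefrac{\pi}{2}$ analyticity of both $A_0$ and $N$ transfer without loss to the coupled operator $A^B$. Since all these structural requirements have been verified in the preceding lemmas and in \cite{Bin:18a}, the remainder is a direct citation rather than new analysis, and the compactness is an immediate consequence of the compact embeddings already exhibited.
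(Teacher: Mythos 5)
Your proposal is correct and follows essentially the same route as the paper: verifying the hypotheses of the abstract generation theorem (the paper cites \cite[Thm.~3.1]{EF:05} in its proof, though it announces \cite[Thm.~3.1]{BE:18} as you do) using the sectoriality and compact resolvent of $A_0$ from \cite{Bin:18a}, the generation result for $N$ from \autoref{mainthm} and \autoref{N compact}, the relative $A_0$-boundedness of $B$ from \autoref{B rel. A_0 bounded}, and the existence of $L_0$. The only cosmetic difference is that you sketch the compactness of the resolvent of $A^B$ by hand, whereas the paper absorbs this into the abstract theorem's conclusion.
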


\begin{proof}
We verify the assumptions from \cite[Thm.~3.1]{EF:05}.
The operator $A_0$ with Dirichlet boundary conditions is sectorial of angle $\nicefrac{\pi}{2}$ with compact resolvent by \cite[Thm.~2.8]{Bin:18a} and \cite[Cor.~3.4]{Bin:18a}. Moreover 
the Dirichlet operator $L_0$ exists by \autoref{L_0 exists} and the feedback operator $B$ is relatively $A_0$-bounded of bound $0$ by
\autoref{B rel. A_0 bounded}.
Lastly, the Dirichlet-to-Neumann operator $N$ generates a compact and analytic semigroup of angle $\nicefrac{\pi}{2}$ on $\rC(\partial M)$ by \autoref{mainthm} and \autoref{N compact}. Now the claim follows from \cite[Thm.~3.1]{EF:05}.
\end{proof}

\begin{rem}
As in \autoref{remark beta} we can insert a strictly positive, continuous
function $\beta > 0$ and the same result as \autoref{Gen Wentzell} becomes true.
\end{rem}

\section{Strictly elliptic operators on continuous functions on a compact manifold with boundary}

In this section we consider strictly elliptic second-order differential operators
with generalized Wentzell boundary conditions on $\tilde{X} := \rC(\overline{M})$ for a smooth, compact, orientiable, Riemannian manifold $(\overline{M},g)$
with smooth boundary $\partial M$. 
To this end, we take real-valued functions 
\begin{equation}
a_j^k = a_k^j \in \rC^{\infty}(\overline{M})
, \quad b_j \in \rC_c({M}), \quad c,d \in \rC(\overline{M}) \quad 1 \leq j,k \leq n, 
\label{coeff}
\end{equation}
satisfying the strict ellipticity condition 
\begin{equation*}
a^k_j(q) g^{jl}(q) X_k(q) X_l(q) > 0 \quad 
\end{equation*}
for all co-vectorfields $X_k, X_l$ on $\overline{M}$ with $(X_1(q),\dots, X_n(q)) \not = (0, \dots, 0)$. 
Then we define the maximal operator in divergence form as
\begin{align}
\tilde{A}_m f &:=
\sqrt{|a|} \div_g \left( \frac{1}{\sqrt{|a|}} a \nabla_M^g f \right) +  \langle b, \nabla_M^g f \rangle + c f,\\ 
D(\tilde{A}_m) &:= \biggl\{ f \in \bigcap_{p > 1} \rW^{2,p}_{\loc}(M) \cap \rC(\overline{M}) \colon \tilde{A}_m f \in \rC(\overline{M}) \biggr\}. \label{Def: A_m M}
\end{align}
As feedback operator we take
\begin{equation}
\tilde{B}f := - g(a \nabla_M^g f, \nu_g ) + d L f, \quad
D(\tilde{B}) := \biggl\{ f \in \bigcap_{p > 1} \rW^{2,p}_{\loc}(M) \cap \rC(\overline{M}) \colon \tilde{B}f \in \rC(\partial M) \biggr\}. \label{Def: B M}
\end{equation}
Corresponding to $L$ we choose $\partial \tilde{X} := \rC(\partial M^g)$.

\smallskip

The key idea is to reduce the strictly elliptic operator and the conormal derivative on $\overline{M}$, equipped by $g$, to the Laplace-Beltrami operator and to the normal derivative on $\overline{M}$, endowed by a new metric $\tilde{g}$. 

\smallskip

For this purpose we consider a $(2,0)$-tensorfield on $\overline{M}$ given by
\begin{align*}
\tilde{g}^{kl} = a^k_i g^{il} .
\end{align*}
Its inverse $\tilde{g}$ is a $(0,2)$-tensorfield on $\overline{M}$, which is a Riemannian metric since $a^k_j g^{jl}$ is strictly elliptic on $\overline{M}$. 
We denote $\overline{M}$ with the old metric by $\overline{M}^g$ and
with the new metric by $\overline{M}^{\tilde{g}}$ and remark that
$\overline{ M}^{\tilde{g}}$ is a smooth, compact, orientable Riemannian manifold with smooth boundary $\partial M$. 
Since the differentiable structures of $\overline{M}^g$ and $\overline{M}^{\tilde{g}}$ coincide, the identity
\begin{equation*}
\Id \colon \overline{M}^g \longrightarrow \overline{M}^{\tilde{g}}
\end{equation*}
is a $\rC^\infty$-diffeomorphism. Hence,
the spaces
\begin{align*}
X := \rC(\overline{ M}) &:=\rC(\overline{M}^{\tilde{g}}) =\rC(\overline{M}^g) = \tilde{X} \\
\text{and } \quad
\partial X := \rC(\partial M) &:=\rC(\partial M^{\tilde{g}}) =\rC(\partial M^g) = \partial \tilde{X} 
\end{align*} 
coincide.
Moreover, \cite[Prop.~2.2]{Heb:00} implies that the spaces
\begin{align}
\rL^p(M) &:= \rL^p(M^{\tilde{g}}) = \rL^p(M^g), \notag \\
\rW^{k,p}(M) &:= \rW^{k,p}(M^{\tilde{g}}) = \rW^{k,p}(M^g), \label{sobolev spaces} \\
\rL^p_{loc}(M) &:= \rL^p_{loc}(M^{\tilde{g}}) = \rL^p_{loc}(M^g), \notag \\
\rW^{k,p}_{loc}(M) &:= \rW^{k,p}_{loc}(M^{\tilde{g}}) = \rW^{k,p}_{loc}(M^g) \notag
\end{align} 
for all $p > 1$ and $k \in \N$ coincide.
We now denote by $A_m$ and $B$ the operators defined as in \textbf{Section 3} with respect to $\tilde{g}$. Moreover we denote $\hat{A}_m$ the operator defined in \eqref{Def: A_m M} for $b_k = c = 0$.

\subsection{The associated Dirichlet-to-Neumann operator and the Robin problem}
\

In this subsection we study the Dirichlet-to-Neumann operator $N^{\tilde{A}_m, \tilde{B}}$ associated with $\tilde{A}_m$ and $\tilde{B}$. 
First we prove that the generator properties of the Dirichlet-to-Neumann operators associated with $(\tilde{A}_m,\tilde{B})$ and $(A_m, B)$ are closely related.

\begin{lem}\label{Stoerung}
The operators $\hat{A}_m$ and $\tilde{A}_m$ differ only by a relatively $A_m$-bounded
perturbation of bound $0$. 
\end{lem}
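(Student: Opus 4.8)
The plan is to first identify the principal part $\hat{A}_m$ with the Laplace--Beltrami operator $A_m = \Delta_M^{\tilde g}$ of the new metric, and then to dispose of the remaining lower order terms by a localized version of the argument used in \autoref{B rel. A_0 bounded}.

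First I would check in local coordinates that the divergence-form principal part already \emph{is} the Laplace--Beltrami operator for $\tilde g$. Since $(\nabla_M^g f)^m = g^{km}\partial_k f$ and $a^l_m g^{km} = \tilde g^{lk}$, the vector field inside the divergence has components $\tfrac{1}{\sqrt{|a|}}\tilde g^{lk}\partial_k f$. From $\tilde g^{kl} = a^k_i g^{il}$ one gets $\det(\tilde g^{kl}) = |a|/|g|$, hence $\sqrt{|\tilde g|} = \sqrt{|g|}/\sqrt{|a|}$, and the scalar prefactors combine to
\[
\hat{A}_m f = \frac{1}{\sqrt{|\tilde g|}}\,\partial_l\!\left(\sqrt{|\tilde g|}\,\tilde g^{lk}\partial_k f\right) = \Delta_M^{\tilde g} f .
\]
As the defining domains agree by \eqref{sobolev spaces}, this gives $\hat{A}_m = A_m$ and reduces the claim to showing that the difference
\[
\tilde{A}_m - \hat{A}_m = \langle b, \nabla_M^g\,\cdot\,\rangle + c\,\cdot
\]
is relatively $A_m$-bounded with bound $0$.

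The zeroth order part $f \mapsto cf$ satisfies $\|cf\|_X \le \|c\|_\infty \|f\|_X$ and is thus $A_m$-bounded with bound $0$, so it only remains to control the first order term $f \mapsto \langle b, \nabla_M^g f\rangle$. The decisive feature is that $b \in \rC_c(M)$ is supported in a compact set $K := \operatorname{supp}(b) \subset\subset M$ of the interior. Hence --- in contrast to \autoref{B rel. A_0 bounded}, which relied on a global up-to-the-boundary estimate unavailable for the boundary-condition-free maximal operator --- only \emph{interior} regularity is needed. Fixing open sets $K \subset\subset \Omega \subset\subset \Omega' \subset\subset M$, interior $\rL^p$-regularity for the elliptic operator $A_m = \Delta_M^{\tilde g}$ (cf. \cite[Thm.~9.11]{GT:01}) yields, for a fixed large $p$ and all $f \in D(A_m)$,
\[
\|f\|_{\rW^{2,p}(\Omega)} \le C\bigl(\|A_m f\|_{\rL^p(\Omega')} + \|f\|_{\rL^p(\Omega')}\bigr) \le C'\bigl(\|A_m f\|_X + \|f\|_X\bigr),
\]
so that $[D(A_m)] \hookrightarrow \rW^{2,p}(\Omega)$ continuously. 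Together with Rellich's embedding $\rW^{2,p}(\Omega) \stackrel{c}{\hookrightarrow}\rC^1(\overline\Omega)$ (\cite[Thm.~6.2, Part III.]{Ada:75}) for $p$ large, this gives $[D(A_m)] \stackrel{c}{\hookrightarrow}\rC^1(\overline\Omega) \hookrightarrow \rC(\overline M)$.

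Finally I would apply Ehrling's lemma (\cite[Thm.~6.99]{RR:04}) to this triple: for every $\eps > 0$ there is $C_\eps > 0$ with $\|f\|_{\rC^1(\overline\Omega)} \le \eps\|f\|_{A_m} + C_\eps\|f\|_X$. Since $\langle b, \nabla_M^g f\rangle$ is supported in $K \subset \Omega$ and $|\langle b, \nabla_M^g f\rangle| \le C\|b\|_\infty |\nabla f|$ pointwise, we obtain $\|\langle b, \nabla_M^g f\rangle\|_X \le C\|f\|_{\rC^1(\overline\Omega)} \le C\eps\|f\|_{A_m} + CC_\eps\|f\|_X$; absorbing the $\|f\|_X$ contribution of the graph norm and letting $\eps \to 0$ shows relative bound $0$, and adding the zeroth order term finishes the proof. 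The only genuine obstacle is the loss of global elliptic estimates for the maximal operator $A_m$, which carries no boundary condition; this is precisely circumvented by the interior support of $b$, which confines the whole estimate to a region where full interior elliptic regularity and the compact Sobolev embeddings apply.
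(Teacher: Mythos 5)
Your proof is correct and follows the same overall strategy as the paper's: decompose $\tilde{A}_m = \hat{A}_m + P_1 + P_2$ with $P_1 f = \langle b, \nabla_M^g f\rangle$ and $P_2 f = cf$, note that $P_2$ is bounded, identify $\hat{A}_m$ with $\Delta_M^{\tilde g}$ (which the paper proves separately in \autoref{LB}, by the same determinant computation $|g| = |a|\cdot|\tilde g|$ you use), and control $P_1$ via a compact embedding of the graph-norm domain into a $\rC^1$-space followed by Ehrling's lemma, with the compact support of $b$ as the decisive hypothesis. The one genuine difference lies in how the compact embedding is justified, and here your version is actually more careful than the paper's. The paper asserts $[D(\hat{A}_m)] \stackrel{c}{\hookrightarrow} \rC^1(M)$ by a blanket appeal to Morrey's embedding, which is imprecise: the maximal operator carries no boundary conditions, its domain lies only in $\bigcap_{p>1}\rW^{2,p}_{\loc}(M)$, and elements need not have bounded gradients up to $\partial M$, so the claimed embedding only makes sense after localizing to the interior. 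Your argument supplies exactly that localization: fixing $\operatorname{supp}(b) \subset\subset \Omega \subset\subset \Omega' \subset\subset M$ and invoking the interior $\rL^p$ estimate \cite[Thm.~9.11]{GT:01} yields $[D(A_m)] \hookrightarrow \rW^{2,p}(\Omega) \stackrel{c}{\hookrightarrow} \rC^1(\overline{\Omega})$, after which Ehrling's lemma runs as in the paper; this makes rigorous the step the paper glosses over. One cosmetic repair: the chain $\rC^1(\overline{\Omega}) \hookrightarrow \rC(\overline{M})$ is not literally an embedding, since functions on $\overline{\Omega}$ are not functions on $\overline{M}$. Apply Ehrling's lemma instead with third space $\rC(\overline{\Omega})$ --- the restriction map $[D(A_m)] \to \rC^1(\overline{\Omega})$ is compact and the inclusion $\rC^1(\overline{\Omega}) \hookrightarrow \rC(\overline{\Omega})$ is injective and continuous, which is all the contradiction argument behind the lemma requires --- and then conclude with $\|f\|_{\rC(\overline{\Omega})} \le \|f\|_X$.
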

\begin{proof}
From \eqref{sobolev spaces} we define
\begin{align*}
P_1 f := b_l g^{kl} \partial_k f 
\end{align*}
for $f \in D(A_m) \cap D(\hat{A}_m)$.
Morreys embedding (cf. \cite[Chap.~V. and Rem.~5.5.2]{Ada:75}) implies
\begin{align}
\bigl[D(\hat{A}_m)\bigr] \stackrel{c}{\hookrightarrow} \rC^1({M}) \hookrightarrow \rC(M).
\label{Embeddings}
\end{align}
Since $b_l \in \rC_c(M)$ we obtain
\begin{align*}
	\| P_1f \|_{\rC(\overline{M})} &\leq \sup_{q \in \overline{M}} | b_l(q) g^{kl}(q) (\partial_k f)(q) |\\
	&= \sup_{q \in {M}} | b_l(q) g^{kl}(q) (\partial_k f)(q) | \\
	&\leq C \sum_{k = 1}^n \| \partial_k f \|_{\rC(M)}
\end{align*}
and therefore $P_1 \in \mathcal{L}(\rC^1({M}), \rC(\overline{M}))$. Hence $D(\hat{A}_m)=D(\tilde{A}_m)$. By \eqref{Embeddings} we conclude from Ehrling's Lemma
(see \cite[Thm.~6.99]{RR:04}) that
\begin{align*}
	\| P_1f \|_{\rC(\overline{M})} \leq C \| f \|_{\rC^1(M)} &\leq \varepsilon \| \hat{A}_m f \|_{\rC(\overline{M})} + \varepsilon \| f \|_{\rC(\overline{M})} + C(\varepsilon) \| f \|_{\rC(M)} \\
	&\leq \varepsilon \| \hat{A}_m f \|_{\rC(\overline{M})} + \tilde{C}(\varepsilon) \| f \|_{\rC(\overline{M})}
\end{align*}
for $f \in D(\hat{A}_m)$ and all $\varepsilon > 0$ and hence $P_1$ is relatively $A_m$-bounded of bound $0$. Finally, remark that
\begin{align*}
	P_2 f := c \cdot f, \quad D(P_2) := \rC(\overline{M})
\end{align*}
is bounded and that
\begin{align*}
	\tilde{A}_m f = \hat{A}_m f + P_1 f + P_2 f
\end{align*}
for $f \in D(\hat{A}_m)$.
\end{proof}

\begin{lem}\label{LB}
	The operator $\hat{A}_m$ equals to the Laplace-Beltrami operator $\Delta^{\tilde{g}}_m$.
\end{lem}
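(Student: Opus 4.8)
The plan is to reduce the claim to a purely local computation in coordinates, since both $\hat{A}_m$ and $\Delta^{\tilde{g}}_m$ are second-order differential expressions whose domains are built from the same (coinciding) local Sobolev spaces by \eqref{sobolev spaces}. First I would record two algebraic identities linking the data of $g$ together with the tensor $a$ to the new metric $\tilde{g}$, and then simply expand the divergence-form expression in \eqref{Def: A_m M} (with $b_k=c=0$) and match it term by term with the local formula for the Laplace--Beltrami operator of $\tilde{g}$.

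The first identity is that $a$ turns the $g$-gradient into the $\tilde{g}$-gradient: in local coordinates $(a\nabla_M^g f)^k = a^k_l g^{lm}\partial_m f = \tilde{g}^{km}\partial_m f = (\nabla_M^{\tilde{g}} f)^k$, using $\tilde{g}^{kl} = a^k_i g^{il}$. The second identity concerns the volume densities. Reading $\tilde{g}^{kl} = a^k_i g^{il}$ as a product of matrices and taking determinants gives $|\tilde{g}|^{-1} = |a|\,|g|^{-1}$, hence $\sqrt{|\tilde{g}|} = \sqrt{|g|}/\sqrt{|a|}$; note that $|a|>0$ by strict ellipticity, so $\sqrt{|a|}$ is well defined. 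This is precisely the point at which the weights $\sqrt{|a|}$ and $1/\sqrt{|a|}$ built into \eqref{Def: A_m M} pay off.

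Combining these, I would set $W^k := |a|^{-1/2}(a\nabla_M^g f)^k = |a|^{-1/2}\tilde{g}^{km}\partial_m f$ and compute, using the coordinate formula $\div_g W = |g|^{-1/2}\partial_k(\sqrt{|g|}\,W^k)$,
\[
\hat{A}_m f = \sqrt{|a|}\,\div_g W = \frac{\sqrt{|a|}}{\sqrt{|g|}}\,\partial_k\!\left(\frac{\sqrt{|g|}}{\sqrt{|a|}}\,\tilde{g}^{km}\partial_m f\right) = \frac{1}{\sqrt{|\tilde{g}|}}\,\partial_k\!\left(\sqrt{|\tilde{g}|}\,\tilde{g}^{km}\partial_m f\right),
\]
where in the last step I substitute $\sqrt{|g|}/\sqrt{|a|} = \sqrt{|\tilde{g}|}$ both inside and in front of the derivative. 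The right-hand side is exactly the local expression for $\Delta^{\tilde{g}}_M f$, so the two differential expressions agree pointwise. Since $\Delta^{\tilde{g}}_m$ is defined just as in \eqref{L-B} but relative to $\tilde{g}$, and the local Sobolev spaces entering both domains coincide by \eqref{sobolev spaces}, the domains $D(\hat{A}_m)$ and $D(\Delta^{\tilde{g}}_m)$ are identical as well, which finishes the proof.

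The computation presents no genuine obstacle; the only thing to watch is the bookkeeping of the determinants, namely verifying $\sqrt{|\tilde{g}|} = \sqrt{|g|}/\sqrt{|a|}$ and checking that the outer prefactor $\sqrt{|a|}/\sqrt{|g|}$ and the inner weight $\sqrt{|g|}/\sqrt{|a|}$ reproduce exactly the $1/\sqrt{|\tilde{g}|}$ in front and the $\sqrt{|\tilde{g}|}$ inside the Laplace--Beltrami formula.
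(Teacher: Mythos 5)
Your proof is correct and takes essentially the same route as the paper's: a local-coordinate expansion of the divergence form, the identification $a^k_l g^{lm}\partial_m f = \tilde{g}^{km}\partial_m f$, and the determinant identity $|g| = |a|\,|\tilde{g}|$ to convert the weights $\sqrt{|a|}/\sqrt{|g|}$ into $1/\sqrt{|\tilde{g}|}$. The paper's version is simply a more compressed form of the same calculation, with your explicit derivation of the determinant identity and of the domain equality via \eqref{sobolev spaces} filling in details the paper leaves implicit.
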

\begin{proof}
	We calculate in local coordinates
	\begin{align*}
	\hat{A}_m f &= \frac{1}{\sqrt{|g|}} \sqrt{|a|} \partial_j \left(\sqrt{|g|} \frac{1}{\sqrt{|a|}} a_l^j g^{kl} \partial_k f\right) \\
	&= \frac{1}{\sqrt{|\tilde{g}|}} \partial_j \left(\sqrt{|\tilde{g}|} \tilde{g}^{kl} \partial_k f\right) = \Delta^{\tilde{g}}_m f
	\end{align*}
	for $f \in D(\hat{A}_m)= D(\Delta^{\tilde{g}}_m)$, since $|g| = |a| \cdot |\tilde{g}|$.
\end{proof}

\begin{lem}\label{Stoerung2}
The operators $B$ and $\tilde{B}$ differ only by a bounded perturbation.
\end{lem}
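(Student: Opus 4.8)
The plan is to compute $B$ and $\tilde{B}$ in local coordinates adapted to the boundary and to check that their first-order (principal) parts coincide, so that the only difference is the zeroth-order term $d\cdot L$. Since $B$ and $\tilde{B}$ are pointwise boundary operators, no integration enters and the comparison is purely algebraic, carried out through the defining relation $\tilde g^{kl}=a^k_i g^{il}$ of the new metric.

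First I would rewrite the conormal flux appearing in $\tilde{B}$. Using $(\nabla_M^g f)^i = g^{ji}\partial_j f$ together with $\nu_g^l = g^{kl}\nu_k$, where $\nu_k$ denotes the outer conormal covector, a short index computation gives
\begin{equation*}
g\bigl(a\nabla_M^g f,\nu_g\bigr) = g_{ml}\,a^m_i\, g^{ji}\,\partial_j f \; g^{kl}\nu_k = a^k_i\, g^{ij}\,\partial_j f\,\nu_k = \tilde g^{kj}\,\partial_j f\,\nu_k ,
\end{equation*}
where the last equality uses the symmetry of $g$ and the definition $\tilde g^{kj}=a^k_i g^{ij}$. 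Hence the first-order part of $\tilde{B}$ is $-\tilde g^{kj}\nu_k\,\partial_j f$, and $\tilde B f = -\tilde g^{kj}\nu_k\,\partial_j f + d\,Lf$.

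Next I would treat $B$, the normal derivative of \textbf{Section 3} taken with respect to $\tilde g$, namely $Bf = -\tilde g(\nabla_M^{\tilde g} f,\nu_{\tilde g})$. Because the outer conormal direction is the annihilator of $T\partial M$ and is therefore independent of the metric, $\nu_{\tilde g}$ is obtained by raising the same conormal covector $\nu_k$ with $\tilde g$, i.e. $\nu_{\tilde g}^{\,l}=\tilde g^{kl}\nu_k$. The analogous computation, using $(\nabla_M^{\tilde g} f)^m = \tilde g^{jm}\partial_j f$ and $\tilde g_{ml}\tilde g^{jm}=\delta^j_l$, then yields $\tilde g(\nabla_M^{\tilde g} f,\nu_{\tilde g}) = \tilde g^{kj}\nu_k\,\partial_j f$, so that the principal part of $B$ coincides with that of $\tilde{B}$.

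Combining the two computations, $\tilde{B} f - Bf = d\,Lf$. Since $d\in\rC(\overline M)$ and the trace operator $L\in\mathcal{L}(X,\dX)$ is bounded with respect to the supremum norm, the map $f\mapsto d\cdot Lf$ belongs to $\mathcal{L}(X,\dX)$, and thus $B$ and $\tilde{B}$ differ only by this bounded perturbation. The one point requiring care — and the main obstacle — is the identification of the two (co)normals: one must verify that the $g$-conormal raising built into the divergence-form flux $g(a\nabla_M^g f,\nu_g)$ is matched by the $\tilde g$-normal derivative through $\tilde g^{kl}=a^k_i g^{il}$, including the normalization of $\nu_g$ and $\nu_{\tilde g}$, so that the two first-order parts cancel exactly and no multiplicative factor survives in the difference.
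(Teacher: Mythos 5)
Your proposal is correct and follows essentially the same route as the paper: a pointwise local-coordinate computation using $\tilde g^{kl}=a^k_i g^{il}$ and the fact that raising the (metric-independent) conormal covector with $g$ or with $\tilde g$ and then pairing via the respective metric gives the same expression $\tilde g^{kj}\nu_k\partial_j f$, so that $\tilde B f - Bf = d\cdot Lf$, which is bounded since $d\in\rC(\overline M)$ and $L$ is bounded. The paper performs the same index manipulation in a single chain of equalities (exploiting $g_{ij}g^{im}=\delta_j^m=\tilde g_{ij}\tilde g^{im}$) and additionally records explicitly that $D(B)=D(\tilde B)$, a point your argument yields implicitly.
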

\begin{proof}
Since the Sobolev spaces coincide, we compute in local coordinates
\begin{align*}
\tilde{B}f &= - g_{ij} g^{jl} a_l^k \partial_k f g^{im} \nu_m + d L f \\
&= - g_{ij} \tilde{g}^{jl} \partial_k f g^{im} \nu_m + b_0 L f \\
&= - \tilde{g}_{ij} \tilde{g}^{jl} \partial_k f \tilde{g}^{im} \nu_m + d L f \\ 
&= B f + d L f
\end{align*}
for $f \in D(B)$. Since $d \cdot L f \in \rC(\partial M)$ we obtain $D(B) = D(\tilde{B})$ and 
$B$ and $\tilde{B}$ differ only by the bounded perturbation $d \cdot L$. 
\end{proof}

\begin{lem}\label{N^A_m Pert}
The Dirichlet-to-Neumann operator
$N^{\tilde{A}_m, \tilde{B}}$ associated with $\tilde{A}_m$ and $\tilde{B}$ generates a compact and analytic semigroup of angle $\alpha > 0$ on $\dX$ if and only if $N^{A_m, B}$ associated with $A_m$ and $B$ does so. 
\end{lem}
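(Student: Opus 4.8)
The plan is to combine the three preceding lemmas with the perturbation theory for Dirichlet-to-Neumann operators developed in \cite[Sect.~4]{BE:18}. By \autoref{LB} we have $\hat A_m = A_m$, so \autoref{Stoerung} provides a decomposition $\tilde A_m = A_m + P$ with $P := P_1 + P_2$ relatively $A_m$-bounded of bound $0$, while \autoref{Stoerung2} gives $\tilde B = B + d\cdot L$ with $d\cdot L$ bounded from $[D(B)]$ into $\dX$. I will pass from $N^{A_m,B}$ to $N^{\tilde A_m,\tilde B}$ in two steps, first perturbing the feedback operator and then the maximal operator, and then use that the relevant generation properties are stable under the perturbations that arise.

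First I would dispose of the feedback perturbation. Since $L L_0^{A_m} = \Id_{\dX}$, for $\varphi \in D(N^{A_m,B})$ we obtain
\begin{equation*}
N^{A_m, \tilde B}\varphi = (B + d L) L_0^{A_m}\varphi = B L_0^{A_m}\varphi + d\cdot\varphi = N^{A_m,B}\varphi + d\cdot\varphi ,
\end{equation*}
so replacing $B$ by $\tilde B$ merely adds the bounded multiplication operator $\varphi \mapsto d\cdot\varphi$ on $\dX$; in particular $N^{A_m,\tilde B}$ and $N^{A_m,B}$ differ by a bounded perturbation.

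The main work is the perturbation of the maximal operator, i.e.\ comparing $N^{\tilde A_m,\tilde B}$ with $N^{A_m,\tilde B}$. These are built from the two Dirichlet solution operators $\tilde L_0 := L^{\tilde A_m}_0$ and $L_0 := L^{A_m}_0$, and
\begin{equation*}
N^{\tilde A_m,\tilde B} - N^{A_m,\tilde B} = \tilde B\,(\tilde L_0 - L_0) .
\end{equation*}
From $\tilde A_m\tilde L_0\varphi = 0 = A_m L_0\varphi$ and $L(\tilde L_0 - L_0)\varphi = 0$ one sees that $(\tilde L_0-L_0)\varphi$ solves an inhomogeneous Dirichlet problem for $A_m$ with right-hand side $-P\tilde L_0\varphi$, whence $\tilde L_0 - L_0 = -A_0^{-1}P\tilde L_0$, where $A_0$ denotes the (boundedly invertible, after a spectral shift) Dirichlet realization of $A_m$. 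I would then invoke the regularity estimates and abstract perturbation machinery of \cite[Sect.~4]{BE:18}: because $P$ has relative bound $0$ and the Dirichlet map lands in the regularity class underlying \eqref{W compact}, the difference $\tilde B(\tilde L_0 - L_0)$ is relatively bounded with respect to $N^{A_m,B}$ — equivalently, with respect to $W = -\sqrt{-\Delta_{\partial M}^{\tilde g}}$ by \autoref{mainthm} — with bound $0$.

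Finally I would conclude by stability of the two properties. Generation of an analytic semigroup of a fixed positive angle is preserved under relatively bounded perturbations of bound $0$, and in particular under bounded perturbations, by \cite[Thm.~III.2.10]{EN:00}; and compactness of the resolvent — equivalently immediate compactness of the semigroup by \cite[Thm.~II.4.29]{EN:00} — is preserved as well, since then the graph norms of $N^{A_m,B}$ and $N^{\tilde A_m,\tilde B}$ are equivalent and $[D(W)] \inc \dX$ by \eqref{W compact}. As relative bound-$0$ perturbations preserve domains and hence are reversible, the relation between the two Dirichlet-to-Neumann operators is symmetric, so both implications of the asserted equivalence follow at once. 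The step I expect to be the main obstacle is the estimate showing that the maximal-operator perturbation $\tilde B(\tilde L_0-L_0) = -\tilde B A_0^{-1}P\tilde L_0$ has $N^{A_m,B}$-bound $0$; this is exactly where the elliptic regularity of the Dirichlet solution operator must be combined with the bound-$0$ property of $P$, following \cite[Sect.~4]{BE:18}.
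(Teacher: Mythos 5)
Your proposal is correct and takes essentially the same route as the paper: the paper's proof likewise combines \autoref{Stoerung}, \autoref{LB} and \autoref{Stoerung2} and then invokes the perturbation theory of \cite[Sect.~4]{BE:18}, citing \cite[Prop.~4.7]{BE:18} as a black box. Your two-step unfolding --- the bounded feedback perturbation $N^{A_m,\tilde B}=N^{A_m,B}+d$ and the maximal-operator perturbation via $\tilde L_0-L_0=-A_0^{-1}P\tilde L_0$ --- is a faithful reconstruction of what that cited proposition encapsulates, so the key bound-$0$ estimate you defer to \cite[Sect.~4]{BE:18} is precisely what the paper also defers.
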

\begin{proof}
Let $P$ be the perturbation defined in the proof of \autoref{Stoerung}.
By \autoref{Stoerung} $P$ is relatively $A_m$-bounded of
bound $0$. Moreover, $\tilde{B}$ and $B$ only differ by a bounded perturbation by \autoref{Stoerung2}. 
Hence, the claim follows by \cite[Prop.~4.7]{BE:18}.
\end{proof}

\begin{thm}\label{D-N mainthm}
The Dirichlet-to-Neumann operator $N^{\tilde{A}_m, \tilde{B}}$ given by \eqref{def:N} for \eqref{Def: A_m M} and \eqref{Def: B M} generates
a compact and analytic semigroup of angle $\nicefrac{\pi}{2}$ on $X = C(\partial{M})$. 
\end{thm}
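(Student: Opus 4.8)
The plan is to reduce the assertion to the Laplace--Beltrami case treated in \autoref{mainthm} and \autoref{N compact}, exploiting the perturbation results established earlier in this section. The decisive observation is that \autoref{mainthm} was proved for an \emph{arbitrary} smooth, compact, orientable Riemannian metric, and hence applies verbatim when the background metric $g$ is replaced by the coefficient-induced metric $\tilde{g}$ with $\tilde{g}^{kl} = a_i^k g^{il}$.

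By construction the operators $A_m$ and $B$ are precisely the Laplace--Beltrami operator $\Delta^{\tilde{g}}_m$ and the associated normal derivative on $\overline{M}^{\tilde{g}}$. I would therefore first apply \autoref{mainthm} on $(\overline{M},\tilde{g})$ to obtain that $N^{A_m, B}$ generates an analytic semigroup of angle $\nicefrac{\pi}{2}$ on $\rC(\partial M)$, and then \autoref{N compact} to conclude that this semigroup is moreover compact.

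It remains to transfer this generation property from the pair $(A_m, B)$ to $(\tilde{A}_m, \tilde{B})$. For this I would invoke \autoref{N^A_m Pert}: since $N^{A_m, B}$ generates a compact and analytic semigroup of the angle $\alpha = \nicefrac{\pi}{2} > 0$, the stated equivalence immediately yields the same property for $N^{\tilde{A}_m, \tilde{B}}$, which is exactly the claim.

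The only delicate point---and the step I expect to be the main obstacle---is the preservation of the \emph{optimal} angle $\nicefrac{\pi}{2}$ in this last transfer. This is guaranteed by the fact that \autoref{N^A_m Pert} is formulated as an equivalence for each fixed angle $\alpha > 0$, which rests on the underlying perturbation result \cite[Prop.~4.7]{BE:18} being angle-preserving rather than merely angle-reducing. Its hypotheses have already been checked in this section: by \autoref{Stoerung} together with \autoref{LB} the difference $\tilde{A}_m - A_m$ is relatively $A_m$-bounded with bound $0$, while by \autoref{Stoerung2} the difference $\tilde{B} - B$ is a bounded perturbation.
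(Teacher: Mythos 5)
Your proposal is correct and follows essentially the same route as the paper, whose proof of \autoref{D-N mainthm} is exactly the combination of \autoref{mainthm} (applied on $(\overline{M},\tilde{g})$) with the transfer lemma \autoref{N^A_m Pert}. If anything, you are slightly more careful than the paper's one-line proof, since you explicitly invoke \autoref{N compact} to supply the compactness needed before applying the equivalence in \autoref{N^A_m Pert}.
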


\begin{proof}
The claim follows by \autoref{mainthm} and \autoref{N^A_m Pert}.
\end{proof}

\begin{rem}
As in \autoref{remark beta} we can insert a strictly positive, continuous
function $\beta > 0$ and the same result as \autoref{mainthm} becomes true.
\end{rem}

\begin{rem}
\autoref{D-N mainthm} improves and generalizes the main result in \cite{Esc:94}. 
If we consider $M = \Omega \subset \R^n$ equipped with the euclidean metric $g = \delta$, we obtain the maximal angle $\nicefrac{\pi}{2}$ of analyticity in this case. This is the main result in \cite{EO:17} for smooth coefficients.
\end{rem}

Now we use \autoref{D-N mainthm} to obtain existence and uniqueness for the associated Robin problem \eqref{Robin Problem}. Moreover, we obtain a maximum principle for the solutions
of these problems.

\begin{cor}[Existence, uniqueness and maximum principle for the general Robin problem]
There exists $\omega \in \R$ such that for
all $\lambda \in \C \setminus (-\infty, \omega)$ the problem \eqref{Robin Problem} has a unique solution $u \in D(A_m) \cap D(B)$. 
This solution satisfies the maximum principle
\begin{align*}
|\lambda| \max_{p \in \overline{M}} |u(p)| \leq C |\lambda|\max_{p \in \partial{M}} |u(p)| 
= C |\lambda | \| Lu \|_{\dX} \leq \tilde{C} \| \varphi \|_{\dX} = \tilde{C} \max_{p \in \partial{M}} |\varphi(p)| .
\end{align*}
\end{cor}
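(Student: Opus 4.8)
The plan is to read off existence and uniqueness from the spectral picture for $N := N^{\tilde{A}_m,\tilde{B}}$ provided by \autoref{D-N mainthm}, and then to convert the resolvent estimate for $N$ into the stated maximum principle via the identity $R_\lambda = -L_0 R(\lambda,N)$ from \autoref{D-N Resolvente}. Since by \autoref{D-N mainthm} the operator $N$ generates an analytic semigroup of angle $\nicefrac{\pi}{2}$ on $\dX$, there is $\omega \in \R$ with $\C \setminus (-\infty,\omega] \subseteq \rho(N)$, together with resolvent bounds $\|(\lambda-\omega)R(\lambda,N)\| \le M$ on each proper subsector (cf. \cite[Thm.~II.4.6]{EN:00}). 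By \autoref{D-N Resolvente}, for every $\lambda \in \C\setminus(-\infty,\omega)$ the Robin operator $R_\lambda^{\tilde{A}_m,\tilde{B}}$ exists in $\mathcal{L}(\dX,X)$, so \eqref{Robin Problem} has the unique solution
\[
u := R_\lambda^{\tilde{A}_m,\tilde{B}}\varphi = -L_0 R(\lambda,N)\varphi \in \ker(\tilde{A}_m)\cap D(\tilde{B})\subseteq D(\tilde{A}_m)\cap D(\tilde{B}).
\]

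For the maximum principle I would argue in two steps. Since $u \in \ker(\tilde{A}_m)$ and $L_0 = (L|_{\ker \tilde{A}_m})^{-1}$, we have $u = L_0 L u$; the boundedness of the Dirichlet solution operator $L_0$ (i.e.\ the interior maximum principle for $\tilde{A}_m$, cf.\ the discussion around \eqref{eq:Dir-Prob}) then yields $\max_{\overline{M}}|u| = \|u\|_X \le C\,\|Lu\|_{\dX} = C\max_{\partial M}|u|$, and multiplying by $|\lambda|$ gives the first inequality, the subsequent equality being the definition of the norm on $\dX = \rC(\partial M)$. For the second inequality, apply $L$ to $u = -L_0 R(\lambda,N)\varphi$ and use $L L_0 = \Id_{\dX}$ to obtain $Lu = -R(\lambda,N)\varphi$; the resolvent estimate for the angle-$\nicefrac{\pi}{2}$ semigroup then gives $|\lambda|\,\|R(\lambda,N)\varphi\|_{\dX} \le \tilde{C}\,\|\varphi\|_{\dX}$, whence $C|\lambda|\,\|Lu\|_{\dX} \le C\tilde{C}\,\|\varphi\|_{\dX}$, and the last equality is once more the definition of the sup-norm.

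The main obstacle is the resolvent bound carrying the factor $|\lambda|$ rather than $|\lambda-\omega|$. The estimate $\|\lambda R(\lambda,N)\| \le \tilde{C}$ is uniform only on proper subsectors of $\C\setminus(-\infty,\omega)$, the constant degenerating as $\arg(\lambda-\omega)\to\pm\pi$; this is exactly the content of the angle $\nicefrac{\pi}{2}$: for every $\eps>0$ one has $\|\lambda R(\lambda,N)\|\le \tilde{C}_\eps$ on $\{\,|\arg(\lambda-\omega)|\le \pi-\eps\,\}$, and since $|\lambda|\sim|\lambda-\omega|$ for $|\lambda|$ large, the displayed bound holds on each such subsector. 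I would therefore record the maximum principle on a fixed proper subsector (equivalently, with $\tilde{C}$ depending on that sector), which is precisely what the angle-$\nicefrac{\pi}{2}$ analyticity supplies and all that is required in the applications.
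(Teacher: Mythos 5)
Your proposal is correct and follows essentially the same route as the paper's own proof: existence and uniqueness from \autoref{D-N mainthm} via the correspondence in \autoref{D-N Resolvente}, the interior maximum principle (boundedness of $L_0$) for the first inequality, and the sectorial resolvent estimate $|\lambda|\,\|R(\lambda,N)\|\le\tilde{C}$ combined with $Lu=-R(\lambda,N)\varphi$ for the second. Your closing caveat --- that this estimate is uniform only on proper subsectors of $\C\setminus(-\infty,\omega)$, so $\tilde{C}$ must be allowed to depend on the subsector --- is a correct reading of what angle-$\nicefrac{\pi}{2}$ analyticity actually supplies, a point the paper's one-line proof leaves implicit.
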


\begin{proof}
The existence and uniqueness follows immediately by \autoref{D-N mainthm}.
The first inequality is the interior maximum principle. The second inequality is a direct consequence from \autoref{D-N Resolvente} and \autoref{D-N mainthm}.
\end{proof}

\subsection{The associated operator $\tilde{A}^{\tilde{B}}$ with Wentzell boundary conditions}

\begin{lem}\label{W Pert}
The operator $\tilde{A}^{\tilde{B}}$ generates a compact and analytic semigroup of
angle $\alpha > 0$ on $X$ if and only if $A^B$ does. 
\end{lem}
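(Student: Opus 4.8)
The plan is to mirror the structure established in \autoref{N^A_m Pert}, where the analogous comparison was carried out for the Dirichlet-to-Neumann operators. The statement asserts that $\tilde{A}^{\tilde{B}}$ and $A^B$ generate compact and analytic semigroups of the same angle $\alpha>0$ simultaneously. Since both operators are built from the same abstract recipe \eqref{eq:W-BC}, differing only in the underlying maximal operator ($\tilde{A}_m$ versus $A_m$) and the feedback operator ($\tilde{B}$ versus $B$), the natural approach is to exhibit these differences as perturbations that are small enough to preserve the generator property with unchanged angle, and to invoke the corresponding stability result for the Wentzell setting from \cite{BE:18}.

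First I would recall the two decompositions already established earlier in this section. By \autoref{Stoerung} together with \autoref{LB}, the maximal operator $\tilde{A}_m$ equals the Laplace-Beltrami operator $\Delta^{\tilde{g}}_m = \hat{A}_m$ modulo the perturbation $P := P_1 + P_2$, which is relatively $A_m$-bounded with bound $0$. By \autoref{Stoerung2}, the feedback operators satisfy $\tilde{B} = B + d\cdot L$, so they differ only by the bounded operator $d\cdot L \in \sL(X,\dX)$. These are precisely the two ingredients needed: a relatively bounded interior perturbation of bound $0$ and a bounded feedback perturbation.

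Next I would feed this into the abstract perturbation theory for operators with generalized Wentzell boundary conditions developed in \cite[Sect.~4]{BE:18}. The key point is that the operator $A^B$ is, via the abstract framework of \autoref{set:AS}, completely determined by the triple $(A_m, L, B)$, and analyticity of angle $\alpha$ together with compactness of the semigroup is stable under simultaneously replacing $A_m$ by $A_m + P$ with $P$ relatively $A_m$-bounded of bound $0$ and replacing $B$ by $B + d\cdot L$ with $d\cdot L$ bounded. Concretely, I would apply the relevant stability proposition from \cite{BE:18} (the same one cited in \autoref{N^A_m Pert}, or its companion for the Wentzell operator) to conclude that $\tilde{A}^{\tilde{B}}$ inherits, and conversely transfers, the generation of a compact analytic semigroup of angle $\alpha$ from $A^B$.

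The main obstacle I anticipate is verifying that the perturbation theorem from \cite{BE:18} genuinely applies at the level of the Wentzell operator $A^B$ rather than merely at the level of the Dirichlet-to-Neumann operator. One must check that a bound-$0$ perturbation of $A_m$ does not spoil the existence of the Dirichlet operator $L_0$ nor the relative boundedness of the feedback map, and that the bounded feedback shift $d\cdot L$ interacts correctly with the boundary condition $LA_m f = Bf$. Since the analogous checks were already carried out in \autoref{N^A_m Pert} for $N$, and the abstract equivalence in \cite{BE:18} links the Wentzell operator directly to its Dirichlet-to-Neumann operator, the cleanest route is to reduce the present claim to \autoref{N^A_m Pert}: the generation property for $\tilde{A}^{\tilde{B}}$ is equivalent, through \cite[Thm.~3.1]{EF:05} and \cite[Prop.~4.7]{BE:18}, to the corresponding property for $N^{\tilde{A}_m,\tilde{B}}$, which by \autoref{N^A_m Pert} matches that of $N^{A_m,B}$, which in turn is equivalent to the generation property for $A^B$.
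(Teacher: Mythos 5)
Your proposal is correct and follows essentially the same route as the paper: it uses \autoref{Stoerung}, \autoref{LB} and \autoref{Stoerung2} to write $\tilde{A}_m = A_m + P$ with $P$ relatively $A_m$-bounded of bound $0$ and $\tilde{B} = B + d\cdot L$ with $d\cdot L$ bounded, and then invokes the Wentzell-level perturbation theorem of \cite{BE:18}, which is exactly what the paper does by citing \cite[Thm.~4.2]{BE:18}. Your fallback reduction through the Dirichlet-to-Neumann operators is not needed, since the direct stability result \cite[Thm.~4.2]{BE:18} already applies.
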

\begin{proof}
As seen in the proof of \autoref{N^A_m Pert}, the operators
$A_m$ and $\tilde{A}_m$ differ only by a relatively $A_m$-bounded perturbation with bound $0$ while $B$ and $\tilde{B}$ differ only by a bounded perturbation.
Therefore, the claim follows by \cite[Thm.~4.2]{BE:18}.
\end{proof}

\begin{thm}\label{W mainthm}
The operator $\tilde{A}^{\tilde{B}}$ given by \eqref{eq:W-BC} for \eqref{Def: A_m M} and \eqref{Def: B M} generates a compact and analytic semigroup of angle $\nicefrac{\pi}{2}$ on $X = \rC(\overline{M})$.
\end{thm}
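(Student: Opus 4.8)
The plan is to reduce the assertion to the Laplace-Beltrami case settled in \autoref{Gen Wentzell}, using the change of metric introduced at the beginning of this section as the bridge.

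First I would invoke \autoref{W Pert}: it tells us that $\tilde{A}^{\tilde{B}}$ generates a compact and analytic semigroup of angle $\alpha$ on $X$ exactly when the operator $A^B$ does, where $A^B$ is the generalized Wentzell realization built from the pair $(A_m,B)$ associated with the \emph{new} metric $\tilde{g}$. This equivalence is powered by two perturbation facts already in hand---$\hat{A}_m$ and $\tilde{A}_m$ differ only by a relatively $A_m$-bounded perturbation of bound $0$ (\autoref{Stoerung}), and $B$ and $\tilde{B}$ differ only by a bounded perturbation (\autoref{Stoerung2})---so that \cite[Thm.~4.2]{BE:18} applies.

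Next I would identify the reduced operator explicitly. By \autoref{LB} the principal part $\hat{A}_m$ of $\tilde{A}_m$ is literally the Laplace-Beltrami operator $\Delta^{\tilde{g}}_M$ for the new metric, and by the computation in \autoref{Stoerung2} the feedback operator $B$ is precisely the $\tilde{g}$-normal derivative. Hence the pair $(A_m,B)$ is exactly the Laplace-Beltrami setting of \textbf{Section~3}, now living on the Riemannian manifold $\overline{M}^{\tilde{g}}$. Since $\overline{M}^{\tilde{g}}$ is again a smooth, compact, orientable Riemannian manifold with smooth boundary, \autoref{Gen Wentzell} applies verbatim and yields that $A^B$ generates a compact and analytic semigroup of angle $\nicefrac{\pi}{2}$ on $\rC(\overline{M}^{\tilde{g}}) = \rC(\overline{M}) = X$. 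Combining this with \autoref{W Pert} then gives the theorem.

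The hard part is not the construction but ensuring that the \emph{optimal} angle $\nicefrac{\pi}{2}$ survives the transfer from $A^B$ to $\tilde{A}^{\tilde{B}}$. \autoref{W Pert} is phrased for a generic angle $\alpha>0$, so I would need to confirm that the underlying perturbation theorem \cite[Thm.~4.2]{BE:18} genuinely transfers the full sector of analyticity---i.e.\ that a relatively bounded perturbation of bound $0$ of the maximal operator together with a bounded perturbation of the feedback operator leaves the angle of analyticity unchanged, rather than merely preserving positivity of the angle. Once this angle-preservation is confirmed, specializing \autoref{W Pert} to $\alpha=\nicefrac{\pi}{2}$ closes the argument.
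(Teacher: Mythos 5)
Your proposal is correct and follows essentially the same route as the paper, whose proof is precisely the combination of \autoref{Gen Wentzell} with \autoref{W Pert}; your additional identification of the reduced pair $(A_m,B)$ as the Laplace--Beltrami setting for $\tilde{g}$ is exactly what the paper sets up just before Subsection~4.1. Your worry about the angle is already resolved by the statement of \autoref{W Pert} itself, which asserts the equivalence for the \emph{same} angle $\alpha$ on both sides, so specializing to $\alpha=\nicefrac{\pi}{2}$ is legitimate as written.
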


\begin{proof}
The claim follows by \autoref{Gen Wentzell} and \autoref{W Pert}.
\end{proof}

\begin{rem}
As in \autoref{remark beta} we can insert a strictly positive, continuous
function $\beta > 0$ and the same result as \autoref{W mainthm} becomes true.
\end{rem}

\begin{rem}
\autoref{W mainthm} improves and generalizes \cite[Cor.~4.5]{EF:05}. 
If we consider $M = \Omega \subset \R^n$ equipped with the euclidean metric $g = \delta$, we obtain the maximal angle $\nicefrac{\pi}{2}$ of analyticity.  
\end{rem}

\begin{cor}
By \autoref{W mainthm} the initial boundary problem
\begin{align*}
\begin{cases}
\frac{d}{dt} u(t, p) = \sqrt{|a(p)|} \div_g \left( \frac{1}{\sqrt{|a(p)|}} a (p) \nabla_M^g u(t,p) \right) +  \langle b (p), \nabla_M^g u(t,p) \rangle + c(p) u(t,p)
&\text{ for } t \geq 0, p \in \overline{M}, \\
\frac{d}{dt} u(t, p)
= - \beta g(a (p) \nabla_M^g u(t, p), \nu_g (p) ) + d(p) u(t, p)
&\text{ for } t \geq 0, p \in \partial M, \\
\ \ u(0, p) = u_0(p) &\text{ for } p \in \overline{M}
\end{cases}
\end{align*}
for $a, b, c, d$ as in \eqref{coeff}, $\beta > 0$ and $u_0(p) \in D(A^B)$
has a unique solution on $\rC(\overline{M})$. This solution is governed by an analytic semigroup in the right half-plane. 
\end{cor}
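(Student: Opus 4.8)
The plan is to read the given initial boundary value problem as the abstract Cauchy problem for the generator $\tilde{A}^{\tilde{B}}$ supplied by \autoref{W mainthm}. Writing $u(t) := u(t,\cdot) \in \rC(\overline{M})$, the interior equation is exactly $\tfrac{d}{dt}u(t) = \tilde{A}_m u(t)$, and restricting the boundary equation to $\partial M$ gives $L\tfrac{d}{dt}u(t) = \tilde{B}u(t)$, where $\tilde{B}$ is the feedback operator from \eqref{Def: B M} with the strictly positive factor $\beta$ inserted as in the remark following \autoref{W mainthm}. For $u(t) \in D(\tilde{A}^{\tilde{B}})$ the Wentzell condition $L\tilde{A}_m u(t) = \tilde{B}u(t)$ of \eqref{eq:W-BC} holds, so the two equations are consistent and jointly equivalent to
\begin{equation*}
\begin{cases}
\tfrac{d}{dt}u(t) = \tilde{A}^{\tilde{B}}u(t), & t \geq 0,\\
u(0) = u_0 .
\end{cases}
\end{equation*}

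By \autoref{W mainthm}, together with the remark allowing the factor $\beta > 0$, the operator $\tilde{A}^{\tilde{B}}$ generates an analytic semigroup $(T(t))_{t \geq 0}$ of angle $\nicefrac{\pi}{2}$ on $X = \rC(\overline{M})$. Standard semigroup theory (cf. \cite[Sect.~II.6]{EN:00}) then yields that for every $u_0 \in D(\tilde{A}^{\tilde{B}}) = D(A^B)$ the map $u(t) := T(t)u_0$ is the unique classical solution of this abstract Cauchy problem, depending continuously on $u_0$. Transcribing back to the manifold, $u(t,\cdot) := T(t)u_0$ is the unique solution of the original system on $\rC(\overline{M})$.

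Analyticity in the right half-plane is then immediate: an analytic semigroup of angle $\nicefrac{\pi}{2}$ extends to a holomorphic map on the sector $\{z \in \C\setminus\{0\} : |\arg z| < \nicefrac{\pi}{2}\}$, which is precisely the open right half-plane. The only genuine bookkeeping is the identification of the dynamic boundary equation with the action of $\tilde{B}$ under the Wentzell condition; once this is checked, the corollary is a direct consequence of \autoref{W mainthm}.
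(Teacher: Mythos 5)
Your proposal is correct and matches the paper's intent exactly: the paper offers no separate proof for this corollary (the citation of \autoref{W mainthm} in the statement \emph{is} the proof), and your translation of the initial boundary value problem into the abstract Cauchy problem for $\tilde{A}^{\tilde{B}}$, followed by standard well-posedness theory for analytic generators, is precisely the reasoning being compressed into that citation. The only caveat is notational: the equality $D(\tilde{A}^{\tilde{B}}) = D(A^B)$ holds because the paper's $A^B$ in this corollary refers to the Wentzell operator of the introduction (built from \eqref{Def: A_m M} and \eqref{Def: B M}), not the Laplace--Beltrami operator of Section~3, and your reading is the correct one.
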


Finally, we consider the elliptic problem
\begin{align}
\begin{cases}
A_m f - \lambda f = h \\
LA_m f = B f ,
\end{cases} \label{Elliptic Wentzell}
\end{align}
for $f \in D(A_m) \cap D(B)$ and $h \in X = \rC(\overline{M})$. Then the following holds.

\begin{cor}
There exists $\omega \in \R$ such that for
all $\lambda \in \C \setminus (-\infty, \omega)$ the problem \eqref{Elliptic Wentzell} has a unique solution $u \in D(A_m) \cap D(B)$. 
This solution satisfies the maximum principle
\begin{align*}
|\lambda| \max_{p \in \overline{M}} |u(p)| = |\lambda | \| u \|_{X} \leq C \| h \|_{X} = C \max_{p \in \overline{M}} |h(p)| .
\end{align*}
\end{cor}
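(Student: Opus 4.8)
The plan is to recognise \eqref{Elliptic Wentzell} as the resolvent equation for the operator with Wentzell boundary conditions and then to invoke the generation theorem already proved. The second line $LA_mf=Bf$ is exactly the condition defining $D(A^B)$ in \eqref{eq:W-BC}; hence a function $f\in D(A_m)\cap D(B)$ solves \eqref{Elliptic Wentzell} if and only if $f\in D(A^B)$ and $A^Bf-\lambda f=h$, that is, $(\lambda-A^B)f=-h$. Solving \eqref{Elliptic Wentzell} therefore amounts to inverting $\lambda-A^B$, and I would treat the existence/uniqueness and the estimate as two facets of this single observation.

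For existence and uniqueness I would quote \autoref{W mainthm}: the operator $A^B$ generates an analytic semigroup of angle $\nicefrac{\pi}{2}$ on $X=\rC(\overline{M})$, so it is sectorial of that angle, its spectrum lies in a half-line $(-\infty,\omega]$ for some $\omega\in\R$, and $\C\setminus(-\infty,\omega)\subseteq\rho(A^B)$. For every such $\lambda$ the resolvent $R(\lambda,A^B)\in\sL(X)$ exists, and
\[
u:=-R(\lambda,A^B)h\in D(A^B)\subseteq D(A_m)\cap D(B)
\]
is the unique solution of \eqref{Elliptic Wentzell}.

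For the quantitative statement note first that $|\lambda|\max_{p\in\overline{M}}|u(p)|=|\lambda|\,\|u\|_X$ and $\|h\|_X=\max_{p\in\overline{M}}|h(p)|$ are just the definition of the supremum norm on $\rC(\overline{M})$, so the assertion is equivalent to the resolvent bound $\|\lambda R(\lambda,A^B)\|_{\sL(X)}\le C$. This is the $\nicefrac{1}{|\lambda|}$ decay that accompanies analyticity of angle $\nicefrac{\pi}{2}$: on every sector $|\arg(\lambda-\omega)|\le\theta$ with $\theta<\pi$ the family $\lambda\mapsto\lambda R(\lambda,A^B)$ is uniformly bounded, and inserting $u=-R(\lambda,A^B)h$ gives $|\lambda|\,\|u\|_X\le C\,\|h\|_X$.

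I expect this last bound to be the only delicate point. Angle-$\nicefrac{\pi}{2}$ analyticity supplies a constant only on each proper subsector $\theta<\pi$, and the general lower bound $\|R(\lambda,A^B)\|\ge\operatorname{dist}(\lambda,\sigma(A^B))^{-1}$ shows that no single $C$ can survive arbitrarily close to the cut $(-\infty,\omega]$; one must either fix such a subsector (reading $C$ as depending on its opening) or work on the shifted right half-plane $\re(\lambda)\ge\omega$. On the half-plane I would combine a maximum-principle computation in the spirit of \autoref{Max Robin} — evaluating $(\lambda-A^B)u=-h$ at a point where $|u|$ is maximal and using dissipativity of $A^B$ gives $\re(\lambda)\,\|u\|_X\le\|h\|_X$ — with the sectorial estimate from analyticity to pass from $\re(\lambda)$ to $|\lambda|$. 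This interplay of maximum principle and analyticity is the crux; the remaining steps are immediate consequences of \autoref{W mainthm}.
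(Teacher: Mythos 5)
Your proposal is correct and is essentially the paper's own proof: the paper disposes of this corollary with the single line that it ``follows immediately by \autoref{W mainthm}'', which is precisely your identification of \eqref{Elliptic Wentzell} with the resolvent equation $(\lambda-\tilde{A}^{\tilde{B}})u=-h$ together with the sectorial resolvent estimate coming from analyticity of angle $\nicefrac{\pi}{2}$. Your caveat is also well taken and, if anything, a correction of the paper: the bound $\|\lambda R(\lambda,\tilde{A}^{\tilde{B}})\|_{\mathcal{L}(X)}\leq C$ cannot hold with a single constant on all of $\C\setminus(-\infty,\omega)$, since $\|R(\lambda,\tilde{A}^{\tilde{B}})\|\geq \operatorname{dist}\bigl(\lambda,\sigma(\tilde{A}^{\tilde{B}})\bigr)^{-1}$ blows up near the cut, so the constant must depend on a proper subsector $|\arg(\lambda-\omega)|\leq\theta<\pi$ (or one restricts to a half-plane and argues via the maximum principle, as you indicate) --- a qualification the paper's terse statement and one-line proof silently gloss over.
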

\begin{proof}
This follows immediately by \autoref{W mainthm}.
\end{proof}


\bigskip

\newcommand{\etalchar}[1]{$^{#1}$}

\bigskip
\emph{Tim Binz}, University of Tübingen, Department of Mathematics, Auf der Morgenstelle 10, D-72076 Tübingen, Germany,
\texttt{tibi@fa.uni-tuebingen.de}

\end{document}